\theoremstyle{plain}
\newtheorem{theorem}{Theorem}[section]
\newtheorem{proposition}[theorem]{Proposition}
\newtheorem{lemma}[theorem]{Lemma}
\newtheorem{corollary}[theorem]{Corollary}
\theoremstyle{definition}
\newtheorem{definition}[theorem]{Definition}
\newtheorem{remark}[theorem]{Remark}
\newtheorem{example}[theorem]{Example}
\theoremstyle{remark}
\newcommand{\eps}{\varepsilon}
\newcommand{\R}{\mathbb{R}}
\newcommand{\cF}{\mathcal{F}}
\newcommand{\cI}{\mathcal{I}}
\newcommand{\cR}{\mathcal{R}}
\DeclareMathOperator*{\argmax}{arg\, max}
\newcommand{\as}{\mbox{-a.s.}}
\newcommand{\1}{\mathbf{1}}
\newcommand{\floor}[1]{\lfloor #1 \rfloor}
\newcommand{\ceil}[1]{\lceil #1 \rceil}
\numberwithin{equation}{section}
\begin{document}

\title{\vspace{-1.5em}
  A Mean Field Competition
\date{\today}
\author{
  Marcel Nutz%
  \thanks{
  Departments of Statistics and Mathematics, Columbia University, mnutz@columbia.edu. Research supported by an Alfred P.\ Sloan Fellowship and NSF Grant DMS-1512900.
  }
  \and
  Yuchong Zhang%
  \thanks{Department of Statistics, Columbia University, yz2915@columbia.edu. Research supported by NSF Grant DMS-1714607.
  }
 }
}
\maketitle \vspace{-1.2em}

\begin{abstract}
We introduce a mean field game with rank-based reward: competing agents optimize their effort to achieve a goal, are ranked according to their completion time, and paid a reward based on their relative rank. First, we propose a tractable Poissonian model in which we can describe the optimal effort for a given reward scheme. Second, we study the principal--agent problem of designing an optimal reward scheme. A surprising, explicit design is found to minimize the time until a given fraction of the population has reached the goal.
\end{abstract}

\vspace{0.9em}

{\small
\noindent \emph{Keywords} Mean field game; rank-based reward; optimal contract; R\&D competition

\noindent \emph{AMS 2010 Subject Classification}
91A13; 
91B40; 
93E20 

}

\section{Introduction}\label{se:intro}

In this paper, we introduce two game-theoretic problems. The first one is a mean field game with infinitely many players that compete to obtain a reward. The second one is a principal--agent problem where the principal interacts with these agents; namely, the principal aims to distribute a given reward budget to the different ranks such as to minimize the time until the agents complete their task.

Let us think of the agents as independent research teams trying to develop a result or product in the same field. Following the literature on dynamic research and development (R\&D) detailed below, attaining a result will be modeled as a binary event. At any time $t$, each agent chooses a research effort $\lambda$ for which a quadratic instantaneous cost $c\lambda^{2}$ is to be paid, where $c>0$ is assumed to be constant for the purpose of this Introduction. In a Poissonian fashion, the agents' probability of reaching the goal in a small time interval $\Delta t$ is then given by $\lambda \Delta t + o(\Delta t)$; in the R\&D literature, $\lambda$ is sometimes interpreted as the accumulation of private knowledge and the goal is to file a patent. The agents are ranked according to their completion times and paid a reward $R(r)$ for rank $r$, where the reward scheme $R$ is a given decreasing\footnote{Decreasing and increasing are understood in the non-strict sense throughout the paper.} function. At any time, the agents observe the fraction $\rho(t)$ of players that have already completed the task and thus which portion of $R$ is still available; more precisely, the agents use feedback controls $\lambda(\rho(t))$. This rank-based coupling of the agents' optimization problems is a non-standard example of a mean field interaction.
We shall show that given $R$, this game has a unique Nash equilibrium when agents optimize the expectation of reward minus cost. In fact, this setting turns out to be very tractable: Theorem~\ref{th:equilibrium} provides explicit formulas for the equilibrium optimal control $\lambda^{*}$ and the agents' value function. These quantities are independent of the cost $c$ which, in the body of the paper, is also allowed to depend on the state $\rho(t)$ to model that the cost may diminish as more results become available.

The second problem is built on top of the first: as we have seen that any reward scheme $R$ leads to a unique equilibrium between the agents, we can study the problem of a manager or policy maker who would like to advance research. More precisely, we aim to minimize the time $T^{*}_{\alpha}$ until a given fraction $\alpha$ of the population has completed their task. The principal has a fixed reward budget $B=\int_{0}^{1} R(r)\,dr$ but may choose the shape of the decreasing function $R$; that is, how much reward to allocate to each rank. Quite surprisingly, the principal's optimization problem has an explicit but nontrivial solution (Theorem~\ref{th:principal}):
$$
  R^*(r)= \frac{B}{C'}\left\{\frac{1}{\sqrt{2-r}}+\frac{1}{2}\log\frac{(1+\sqrt{2-\alpha})(1-\sqrt{2-r})}{(1-\sqrt{2-\alpha})(1+\sqrt{2-r})}\right\}\1_{[0,\alpha]}(r),
$$
where $C'$ is a constant such that the budget constraint is saturated. As can be seen in Figure~\ref{fig:opt_rwd}, this function has two main features. The first one is a discontinuity at $r=\alpha$: a substantial amount is awarded to the last few relevant agents, but it is optimal to pay zero reward to the ranks after $\alpha$. While it is clearly important to incentivize the last agents that will complete the $\alpha$ fraction, these agents are not too discouraged by the fact that they may miss the rewarding ranks. The second feature is the shape of $R^*$ on $[0,\alpha]$. A priori, it may not even be obvious if it is better to provide a strictly decreasing reward, compared to paying the same amount to the first $\alpha$ ranks. It turns out that $R^*$ is decreasing, even if not very much so, and moreover it is concave. Thus, the difference in reward between two equidistant ranks increases later in the game, apparently to incentivize the remaining agents to choose a higher effort as shown on the second panel of Figure~\ref{fig:opt_rwd}.

\begin{figure}
\centering
\hspace*{-0.65in}
\includegraphics[height=4cm]{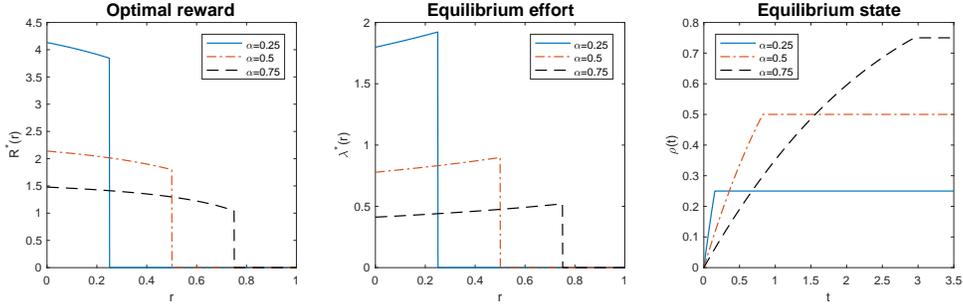}
\caption{The optimal reward scheme $R^{*}$ and the corresponding equilibrium effort $\lambda^{*}$ and state process $\rho$ for three different cut-off values $\alpha$, with $B=1$ and $c\equiv 1$.} 
\label{fig:opt_rwd}
\vspace{-1em}
\end{figure}

While formulating our game with a continuum of agents is convenient to get directly to our main results, a more fundamental justification for the mean field game is to study an $N$-player game for $N\to\infty$. Indeed, we establish that the $N$-player versions of our two problems have unique solutions, albeit somewhat less explicit than in the mean field case. We show that the $N$-player equilibrium  converges to the mean field limit; that is, the value functions and the optimal feedback controls converge (Theorem~\ref{thm:fwd-convergence}) if the given reward schemes converge. Moreover, the optimal reward schemes for the principal and the corresponding expected completion times converge (Theorem~\ref{thm:principal-convergence}). The analysis for finite $N$ also allows us to study size effects which cannot be observed in the mean field limit and thus are rarely addressed for large population games. In particular, we shall observe in Section~\ref{se:sizeEffects} that an increase in population size adversely affects the principal: the minimal expected completion time for a fixed target proportion $\alpha$ is increasing in $N$.

Many of our results make crucial use of the explicit or semi-explicit formulas that can be obtained in our model; in fact, much of our work has been devoted to finding a setting that is tractable. On the other hand, one may suspect that qualitative features of the results, in particular the shape of the optimal reward scheme $R^{*}$, would be similar even if the precise mechanism of the competition were different. At this time, it seems that we do not have the tools to address this question which therefore remains an interesting direction for future research.

\subsection{Literature}

Dynamic competitions (also called races) are classical in the Economics literature. An early reference related to our paper is Reinganum~\cite{Reinganum.82} which discusses an $N$-player dynamic game of R\&D and patent protection. Rewards are paid at a fixed time horizon and two cases are studied: either only the first-ranked player is rewarded (perfect patent protection), or the subsequent ranks receive a positive, smaller reward; however, only the case of an identical reward for all ``imitators'' is considered. Malueg and Tsutsui~\cite{MaluegTsutsui.97} extend Reinganum's setting with a hazard rate that represents changes in the difficulty of the research project, an aspect we have incorporated differently by using a state-dependent cost that can model how the increase in public knowledge affects the project.
Harris and Vickers~\cite{HarrisVickers.87} and Grossman and Shapiro~\cite{GrossmanShapiro.87} focus on the strategic interactions between agents in multiperiod 2-player games. A recent work in this area is Cao~\cite{Cao.14} which studies a continuous-time, continuous-state version of a model in~\cite{HarrisVickers.87}. We refer to \cite{Cao.14,MaluegTsutsui.97} for further references in this literature.

Mean field games were introduced by Lasry and Lions~\cite{LasryLions.06a,LasryLions.06b,LasryLions.07} and Huang, Malham\'e, and Caines~\cite{HuangMalhameCaines.07, HuangMalhameCaines.06} to study Nash equilibria in the limiting regime where the number of players tends to infinity and interactions take place through the empirical distribution of the private states; we refer to Gu\'eant, Lasry and Lions \cite{GueantLasryLions.11}, Bensoussan, Frehse and Yam~\cite{BensoussanFrehseYam.13} and Carmona and Delarue~\cite{CarmonaDelaRue.17a,CarmonaDelaRue.17b} for background on mean field games. Since the impact of an individual player on the aggregate distribution is negligible, finding a Nash equilibrium reduces to solving a stochastic optimization problem for a representative player against a fixed environment, together with a consistency condition. This can be justified rigorously by formulating a game with a continuum of players and by showing that it is the limit of an $N$-player game. Convergence is often shown backwards; i.e., the mean field equilibrium is shown to provide an $\eps$-Nash equilibrium for the $N$-player game. The forward convergence of the $N$-player equilibrium to the mean field equilibrium is typically more difficult to prove. For standard, diffusion-driven mean field games, this has been accomplished recently in the seminal work of Cardaliaguet, Delarue, Lasry and Lions~\cite{CardaliaguetDelarueLasryLions.15}. While our game is of a different form, its tractability allows us to give an elementary yet nontrivial proof of the forward convergence; one feature in common with~\cite{CardaliaguetDelarueLasryLions.15} is that we use feedback controls. In a finite-state setting, forward convergence was shown by Gomes, Mohr and Souza \cite{GomesMohrSouza.13} for a small time horizon and recently by Bayraktar and Cohen~\cite{BayraktarCohen.17} for an arbitrary finite time horizon.

Competitions, i.e., rank-based rewards, are a classical topic in contract theory, dating back to the work of Lazear and Rosen~\cite{LazearRosen.81}. 
With applications from school grades to sports and business competitions, rank-order prize allocation is one of the most widely used relative performance evaluation criteria; we refer to Vojnovi\'c~\cite{Vojnovic.15} for a detailed introduction and an extensive list of references. To the best of our knowledge, the only existing work on mean field games with rank-based reward is Bayraktar and Zhang~\cite{BayraktarZhang.16} where players are ranked according to their terminal positions. The main aim of that work is to obtain abstract existence results for games with common noise via translation invariance, whereas in the present work players are ranked according to exit times and the focus is on specific properties of solutions (and, of course, the principal's problem). In a different way, exit times are used in the toy example ``When does the meeting start?'' of~\cite{GueantLasryLions.11}. Rank-based features are also studied in the literature on (uncontrolled) particle systems; one example is Shkolnikov~\cite{Shkolnikov.12}. Nadtochiy and Shkolnikov~\cite{NadtochiyShkolnikov.17} consider particles interacting through hitting times. A different but related recent literature studies mean field games of timing where players directly choose stopping times; see  Carmona and Lacker~\cite{CarmonaDelaRueLacker.17}, Bertucci~\cite{Bertucci.17} and Nutz~\cite{Nutz.16}.

Continuous-time principal--agent problems with multiple agents have been studied by Koo, Shim and Sung~\cite{KooShimSung.08} and {Elie and Possama\"{i}~\cite{EliePossamai.16}, and extended to the mean field setting by Elie, Mastrolia and Possama\"{i}~\cite{ElieMastroliaPossamai.16} and Bensoussan, Chau and Yam~\cite{BensoussanChauYam.16}. While these works have not considered rank-based rewards, a common feature is the Stackelberg equilibrium: the principal designs a reward scheme which the agents take as an external input to form a Nash equilibrium among themselves. By contrast, in mean field games with a major player as in Huang~\cite{Huang.09} or Carmona and Wang~\cite{CarmonaWang.16}, a Nash equilibrium is formed collectively by the major and minor players. To the best of our knowledge, the convergence of the $N$-player principal--agent problem to the mean field limit has only been established in a simple example where the equilibrium controls are independent of $N$; see~\cite{ElieMastroliaPossamai.16}.

The remainder of this paper is structured as follows. In Section~\ref{se:equilib}, we determine the unique Nash equilibrium of the mean field competition for a continuum of players with a given reward scheme. On the strength of this result, Section~\ref{se:MFprincipal} solves the associated principal--agent problem where the principal designs the reward scheme. In Section~\ref{se:NplayerProblems} we study the corresponding $N$-player problems and Section~\ref{se:convergence} establishes their convergence as $N\to\infty$. Proofs are gathered in Appendix~\ref{se:proofs}, whereas Appendix~\ref{se:exactLLN} provides  background on the Exact Law of Large Numbers used in Section~\ref{se:equilib}.

\section{Mean Field Game}\label{se:equilib}

Let $(I,\cI,\mu)$ be an atomless probability space; each $i\in I$ is thought of as an agent. Moreover, let $(\Omega,\cF,P)$ be another probability space, to be used as the sample space. Let $(Z^{i})_{i\in I}$ be a family of exponential(1)-distributed random variables on $\Omega$ which is essentially pairwise independent; that is, for $\mu$-almost all $i\in I$, $Z^{i}$ is independent of $Z^{j}$ for $\mu$-almost all $j\in I$. We assume that this family is defined on an extension of the product $(I\times\Omega,\cI\otimes \cF,\mu\otimes P)$ for which the Exact Law of Large Numbers holds, as detailed in Appendix~\ref{se:ELLN}.
Given a locally Lebesgue-integrable function $\theta:\R\to [0,\infty)$, we define 
\begin{equation}\label{eq:arrivalTimeDefn}
  \tau^{i}_{\theta}=\inf \left\{t:\, \int_{0}^{t} \theta(s)\,ds = Z^{i}\right\};
\end{equation}
then $(\tau^{i}_{\theta})_{i\in I}$ are essentially pairwise independent and their distribution corresponds to the first jump time of an inhomogeneous Poisson process with intensity $\theta$. Below, the function $\theta$ is of the form $\theta=\lambda\circ\rho$ where $\lambda$ is a function chosen by the agent and $\rho$ is a given function, and we shall find it convenient to write $\tau^{i}_{\lambda}$ for $\tau^{i}_{\theta}$ despite the abuse of notation. If $\tau^{i}_{\lambda}\leq t$, we shall say that agent $i$ has ``arrived'' by time $t$.

We define an \emph{admissible (feedback) control} as a piecewise Lipschitz continuous\footnote{That is, $[0,1)$ is the union of finitely many intervals on which $\lambda$ is Lipschitz.} function $\lambda: [0,1)\to\R_{+}$. The next lemma introduces the state process that emerges if all agents use the control $\lambda$.

\begin{lemma}\label{le:kolmogorovEqn}
  Let $\lambda\in\Lambda$ be an admissible feedback control. There exists a unique continuous function $\rho: \R_{+}\to [0,1)$ satisfying
  \begin{equation}\label{eq:kolmogorov}
    \rho(t)=\int_{0}^{t}\lambda(\rho(s))(1-\rho(s))\,ds,\quad t\geq 0.
  \end{equation}
  If all agents use the feedback control $\lambda$, then
  $\rho(t) = \mu\{i:\, \tau_{\lambda}^{i}(\omega)\in [0,t]\}$ $P\as$ as well as 
  $\rho(t) = P\{\tau_{\lambda}^{i}\in [0,t]\}$ $\mu\as$;
  that is, $\rho(t)$ is both the proportion of agents that have arrived by time~$t$ and the probability that any given agent~$i$ has arrived by time~$t$.
\end{lemma}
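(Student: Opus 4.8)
The plan is to split the statement into three parts: (a) existence and uniqueness of a continuous solution $\rho$ to the fixed-point ODE \eqref{eq:kolmogorov}; (b) the identity $\rho(t)=\mu\{i:\tau^i_\lambda(\omega)\in[0,t]\}$ holding $P$-a.s.; and (c) the identity $\rho(t)=P\{\tau^i_\lambda\in[0,t]\}$ holding $\mu$-a.s. Part (c) is the easiest: for a single agent using $\lambda$, the arrival time $\tau^i_\lambda$ is, by \eqref{eq:arrivalTimeDefn} with $\theta=\lambda\circ\rho$, the first jump of an inhomogeneous Poisson process with intensity $\lambda(\rho(s))$, so $P\{\tau^i_\lambda>t\}=\exp(-\int_0^t\lambda(\rho(s))\,ds)$; writing $G(t):=P\{\tau^i_\lambda\in[0,t]\}=1-\exp(-\int_0^t\lambda(\rho(s))\,ds)$ and differentiating gives $G'(t)=\lambda(\rho(t))(1-G(t))$ with $G(0)=0$. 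If $\rho$ solves \eqref{eq:kolmogorov}, then $\rho$ itself solves this same linear ODE in $G$, so $G\equiv\rho$ by uniqueness for linear ODEs. Part (b) is where the Exact Law of Large Numbers enters: for fixed $t$, the random variables $(\1_{\{\tau^i_\lambda\in[0,t]\}})_{i\in I}$ are essentially pairwise independent (inherited from the $Z^i$, since $\tau^i_\lambda$ is a deterministic function of $Z^i$ once $\rho$ — hence $\theta$ — is fixed), each with mean $P\{\tau^i_\lambda\in[0,t]\}=\rho(t)$ by part (c), so the ELLN (Appendix~\ref{se:ELLN}) yields $\mu\{i:\tau^i_\lambda(\omega)\in[0,t]\}=\rho(t)$ for $P$-a.e.\ $\omega$; a standard argument passing through a countable dense set of $t$ and using monotonicity/right-continuity in $t$ upgrades this to a single $P$-null set valid for all $t$ simultaneously.

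For part (a), I would first reduce \eqref{eq:kolmogorov} to a genuine ODE: a continuous $\rho$ satisfying the integral equation is automatically $C^1$ on each interval where $\lambda$ is continuous, with $\rho'(t)=\lambda(\rho(t))(1-\rho(t))$ and $\rho(0)=0$. On the first Lipschitz piece of $\lambda$, the vector field $x\mapsto\lambda(x)(1-x)$ is Lipschitz on $[0,1)$ (product of a Lipschitz function and the Lipschitz function $1-x$, both bounded on compact subsets), so Picard–Lindelöf gives local existence and uniqueness; since $\rho'\ge 0$ and $\rho'=0$ once $\rho$ hits $1$, the solution is nondecreasing and cannot leave $[0,1)$ in finite time (indeed $1-\rho(t)\ge\exp(-\int_0^t\lambda(\rho(s))\,ds)>0$ as long as it is defined, giving a global-in-time bound keeping $\rho<1$), so the solution extends to all of $\R_+$. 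At a break point $\bar t$ between Lipschitz pieces one restarts with the value $\rho(\bar t)\in[0,1)$ and the next piece; doing this finitely many times (over $[0,1)$) and then noting $\lambda$ is only defined on $[0,1)$ — but $\rho$ takes values in $[0,1)$, so $\lambda(\rho(\cdot))$ is always well-defined — yields a unique continuous global solution. Uniqueness across break points follows because two solutions agreeing up to $\bar t$ must agree on the next piece by Picard–Lindelöf again.

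The main obstacle, and the point that deserves the most care, is part (b): one must verify that the family $(\1_{\{\tau^i_\lambda\in[0,t]\}})_{i\in I}$ genuinely satisfies the hypotheses of the Exact Law of Large Numbers on the chosen extension of $(I\times\Omega,\cI\otimes\cF,\mu\otimes P)$ — in particular joint measurability of $(i,\omega)\mapsto\tau^i_\lambda(\omega)$ and the preservation of essential pairwise independence under the (measurable, deterministic in $\rho$) map $Z^i\mapsto\tau^i_\lambda$ — and then handle the order-of-quantifiers issue so that a \emph{single} $P$-null exceptional set works for all $t\ge 0$. Everything else is standard ODE theory and elementary manipulation of the exponential formula.
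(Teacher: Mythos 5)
Your proposal is correct and follows essentially the same route as the paper: solve the ODE piecewise for $\rho$, identify the per-agent arrival c.d.f.\ as $\bar\rho(t)=1-\exp(-\int_0^t\lambda(\rho(s))\,ds)$, apply the Exact Law of Large Numbers to equate the cross-sectional fraction with this probability, and observe that $\bar\rho$ solves the same linear ODE $y'=\lambda(\rho(t))(1-y)$, $y(0)=0$, as $\rho$, forcing $\bar\rho=\rho$. Your write-up is in fact somewhat more explicit than the paper's (Picard--Lindel\"of across the break points, the countable-dense-set argument for a single null set valid for all $t$), but the substance is identical.
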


Next, we fix a \emph{cost coefficient} $c:[0,1]\to (0,\infty)$ which is assumed to be Lipschitz continuous (thus, $c$ and $1/c$ are bounded). Moreover, we fix a \emph{reward scheme} $R: [0,1]\to\R_{+}$ which is assumed to be decreasing, piecewise Lipschitz continuous, and left-continuous at $r=1$. We interpret $R(r)$ as the reward paid to an agent arriving at rank $r$.

Let us now consider the control problem of a given agent~$i$ before arriving, assuming that a proportion~$r$ of the population has already arrived and that all other agents transition according to a deterministic function $\rho$. Then, our agents' value function is 
\begin{equation}\label{eq:valueSinglePlayer}
    v(r)=\sup_{\lambda\in\Lambda} E\left[R(\rho(\tau_{\lambda}))-\int_0^{\tau_{\lambda}} c(\rho(t)) \lambda(\rho(t))^2\,dt\bigg|\rho(0)=r\right],
\end{equation}
where $\tau_{\lambda}=\tau_{\lambda}^{i}$ is the arrival time of the given agent for the control $\lambda$. Here, we use the convention that $\rho(\infty):=1$, meaning that agents who never arrive are paid the reward $R(1)$. If $\lambda\in\Lambda$ attains the supremum in~\eqref{eq:valueSinglePlayer}, we say that $\lambda$ is an \emph{optimal control given $\rho$}.

If $\lambda$ is an optimal control given the induced function $\rho$ defined by~\eqref{eq:kolmogorov}, we say that $\lambda$ is an \emph{equilibrium optimal control} and $\rho$ is the corresponding \emph{equilibrium state process}. This is a Nash equilibrium: if all other players use the feedback control $\lambda$, the state evolves according to $\rho$ by Lemma~\ref{le:kolmogorovEqn} (recall that $\mu$ is atomless), and then $\lambda$ is an optimal control for our fixed player.

\begin{theorem}\label{th:equilibrium}
  Let $R$ be a reward scheme. Then there exists a unique (a.e.) equilibrium optimal control $\lambda^*\in\Lambda$, given by
  \begin{equation}\label{eq:equilibOptControl}
    \lambda^*(r)=\frac{R(r)-\frac{1}{2\sqrt{1-r}}\int_r^1 \frac{R(y)}{\sqrt{1-y}}\,dy}{2c(r)},\quad r\in[0,1) 
  \end{equation}
  and the corresponding equilibrium state process $\rho$ is determined by~\eqref{eq:kolmogorov} with $\lambda=\lambda^{*}$. In equilibrium, the value function of any agent before arriving is 
  \begin{equation}\label{eq:value}
    v(r)=\frac{1}{2\sqrt{1-r}}\int_{r}^1 \frac{R(y)}{\sqrt{1-y}}\,dy,\quad r\in[0,1).
  \end{equation}
\end{theorem}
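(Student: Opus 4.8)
The plan is to solve the representative agent's problem for a fixed aggregate path by a guess-and-verify argument, then identify the Nash fixed point, and finally prove uniqueness. Fix an admissible aggregate feedback $\psi$ and let $\rho$ be the increasing path it induces via~\eqref{eq:kolmogorov} from an initial proportion $r=\rho(0)$; since $\dot\rho=\psi(\rho)(1-\rho)$ we may use $u=\rho(t)$ as the state variable for the agent (so $du=\psi(u)(1-u)\,dt$). Writing $v$ for the agent's value function, the dynamic programming principle over an infinitesimal step — arrive with ``probability'' $\mu\,dt$ collecting $R(u)$, otherwise continue with value $v(u+du)$, paying $c(u)\mu^{2}\,dt$ — yields the HJB equation
\begin{equation*}
  0=\sup_{\mu\ge 0}\bigl\{\mu\,(R(u)-v(u))-c(u)\,\mu^{2}\bigr\}+\psi(u)\,(1-u)\,v'(u),
\end{equation*}
whose interior maximizer is $\mu=(R(u)-v(u))/(2c(u))$. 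Imposing the equilibrium consistency that the aggregate feedback equal this optimal response reduces the equation to the \emph{linear} first-order ODE $2(1-u)v'(u)-v(u)=-R(u)$, with general solution $v(u)=(1-u)^{-1/2}\bigl(K+\tfrac12\int_{u}^{1}R(y)(1-y)^{-1/2}\,dy\bigr)$; boundedness of a value function forces $K=0$, giving~\eqref{eq:value}, and then $\lambda^{*}=(R-v)/(2c)$ is~\eqref{eq:equilibOptControl}.

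Before verifying I record the elementary consequences of $R$ being decreasing and $\int_{u}^{1}(1-y)^{-1/2}\,dy=2\sqrt{1-u}$: one has $R(1)\le v(u)\le R(u)$ for every $u$, so $v(u)\to R(1)$ as $u\uparrow 1$, $\lambda^{*}\ge 0$, and $v$ is piecewise Lipschitz on $[0,1)$ — the only delicate point being near $u=1$, where $|v'(u)|\le\bigl(R(u)-R(1)\bigr)/\bigl(2(1-u)\bigr)$ is bounded on the final interval on which $R$ is Lipschitz — so that $\lambda^{*}\in\Lambda$. Now let $\rho$ be the path induced by $\lambda^{*}$ and let $\mu\in\Lambda$ be an arbitrary competitor with $\Gamma(t)=\int_{0}^{t}\mu(\rho(s))\,ds$. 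Applying the fundamental theorem of calculus on $[0,T]$ to the absolutely continuous map $t\mapsto e^{-\Gamma(t)}v(\rho(t))$ and using that the HJB relation holds with ``$\le$'' for every $\mu$ and with equality for $\mu=\lambda^{*}$, one gets, after letting $T\to\infty$,
\begin{equation*}
  v(r)\ \ge\ E\!\left[R(\rho(\tau_{\mu}))-\int_{0}^{\tau_{\mu}}c(\rho(t))\,\mu(\rho(t))^{2}\,dt\right]+e^{-\Gamma(\infty)}\Bigl(\lim_{t\to\infty}v(\rho(t))-R(1)\Bigr),
\end{equation*}
where the last term is $\ge 0$ because $v\ge R(1)$. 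For $\mu=\lambda^{*}$ all inequalities become equalities: along the $\lambda^{*}$-path $\int_{0}^{t}\lambda^{*}(\rho(s))\,ds=\log\frac{1-r}{1-\rho(t)}$, so either $\rho(t)\uparrow 1$ and this integral diverges, or $\rho$ freezes at some $u_{\infty}<1$, which happens only where $\lambda^{*}$ vanishes, i.e.\ where $R$ is constant, so that $R\equiv R(1)$ on $[u_{\infty},1]$ and $v(u_{\infty})=R(1)$. Hence $\lambda^{*}$ attains the supremum in~\eqref{eq:valueSinglePlayer} with value $v(r)$ for every $r$, and by Lemma~\ref{le:kolmogorovEqn} this exhibits $\lambda^{*}$ as an equilibrium optimal control with value function $v$ and equilibrium state process $\rho$.

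For uniqueness, let $\bar\lambda\in\Lambda$ be any equilibrium optimal control (optimal from every initial proportion, as befits a feedback equilibrium), with induced path $\bar\rho$ and value function $\bar v$. Standard dynamic-programming arguments give that $\bar v$ solves the HJB equation above with aggregate feedback $\bar\lambda$ and that $\bar\lambda=(R-\bar v)/(2c)$ a.e.\ (using $\bar v\le R$); substituting this back shows that at a.e.\ $u$ either $\bar v(u)=R(u)$ or $2(1-u)\bar v'(u)-\bar v(u)=-R(u)$. Wherever $\bar v=R$ the aggregate uses $\bar\lambda=0$ and the state is frozen, and for the supremum in~\eqref{eq:valueSinglePlayer} at a frozen proportion $r$ to be \emph{attained} — which it must be since $\bar\lambda$ is optimal — the convention $\rho(\infty):=1$ forces $R(r)=R(1)$; since $R$ is decreasing, $\{\bar v=R\}=\{R=R(1)\}$ is therefore a terminal interval $[u^{*},1)$ on which $\bar v=R\equiv R(1)$ agrees with the candidate $w:=$ the right-hand side of~\eqref{eq:value}. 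On $[0,u^{*})$ (with $u^{*}=1$ if $\{\bar v=R\}=\emptyset$), $\bar v$ and $w$ both solve the linear ODE, so their difference $z$ solves $2(1-u)z'=z$ and hence $z(u)=z(u_{0})\sqrt{(1-u_{0})/(1-u)}$; if $u^{*}<1$ then $\bar v(u^{*-})=R(1)=w(u^{*-})$ so $z\equiv 0$, while if $u^{*}=1$ then $|z(u)|\le R(u)-R(1)\to 0$ as $u\uparrow 1$ (because $R(1)\le\bar v,w\le R$), again forcing $z\equiv 0$. Thus $\bar v=w$ on $[0,1)$ and $\bar\lambda=(R-w)/(2c)=\lambda^{*}$ up to a.e.\ equivalence. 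The steps I expect to need the most care are the transversality term at $t=\infty$ in the verification — i.e.\ the treatment of agents who never arrive, together with the convention $\rho(\infty):=1$ — and the analysis of the frozen set $\{\bar v=R\}$ in the uniqueness argument.
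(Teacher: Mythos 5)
Your overall strategy coincides with the paper's: reduce the agent's problem to the Hamilton--Jacobi equation, impose the consistency condition to obtain the linear ODE $R(r)-v(r)+2(1-r)v'(r)=0$, solve it explicitly (the boundedness argument killing the constant $K$ is equivalent to the paper's boundary condition $v(1-)=R(1)$), verify, and then prove uniqueness by showing any equilibrium value function solves the same ODE, whose homogeneous solutions $z(u)\propto(1-u)^{-1/2}$ are incompatible with $R(1)\le z+w\le R$ near $u=1$. Your verification step is in fact carried out in more detail than in the paper (which only invokes ``a verification argument''), and your treatment of the transversality term and of the frozen set agrees with the paper's analysis of the interval $[r_0,1]$ where $R\equiv R(1)$.

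The one genuine gap is in the uniqueness direction, precisely where the paper does its real work. You write that ``standard dynamic-programming arguments give that $\bar v$ solves the HJB equation,'' but for an \emph{arbitrary} equilibrium $\bar\lambda$ this is not free: to write the equation with $\bar v'$ a.e.\ and extract the first-order condition $\bar\lambda=(R-\bar v)/(2c)$, one must first know that $\bar v$ is absolutely continuous. The paper establishes this in two steps: first it shows that $\bar\lambda$ is a.e.\ bounded away from zero on every compact subinterval of $[0,r_0)$ (using $\bar v<R$ there and $\bar\lambda\in\Lambda$), and then it runs a control-comparison argument --- delaying the optimal control by setting it to zero on $[r,r+h)$ and compensating on $[r+h,r+2h)$ --- to show $0\le\bar v(r)-\bar v(r+h)\le Ch$, i.e.\ that $\bar v$ is Lipschitz on such intervals. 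Without this regularity your reduction of an arbitrary equilibrium to the ODE is unjustified, and the ODE-uniqueness argument that follows has nothing to bite on. Everything else in your proposal is sound; supplying this a priori regularity of $\bar v$ (or an equivalent substitute, e.g.\ a rigorous dynamic-programming identity in integrated form) would complete the proof.
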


Let us remark that while the piecewise Lipschitz requirement is mainly for convenience, the continuity of $R$ at $r=1$ is more essential in providing existence. The following example exhibits a phenomenon that is familiar in infinite-horizon optimal stopping problems.

\begin{example}\label{ex:counterExExistence}
  Suppose that $c\equiv1$ and $R=\1_{[0,1)}$; that is, the reward is one for agents arriving in finite time and zero for those who never arrive. Then, using the constant control $\lambda\equiv\eps>0$ yields an exponential arrival time with $E[\tau]=1/\eps$ and thus the expected reward is
  $$
    E\left[R(\rho(\tau))-\int_0^{\tau} \eps^2\,dt\right] = 1-\eps.
  $$
  As a result, the value function satisfies $v(r)=1=R(r)$ for all $r<1$. But since $\lambda\equiv0$ yields zero reward and any other control has positive cost, this value is not attained:  there is no optimal control, and thus no equilibrium in the above sense.
\end{example}

\begin{remark}\label{rk:valueIndeCost}
  We see from~\eqref{eq:value} that the equilibrium value function is independent of the cost coefficient $c$. This can also be understood directly by expressing~\eqref{eq:valueSinglePlayer} as an integral over the ranks, using the Kolmogorov equation~\eqref{eq:kolmogorov} and the change-of-variable formula:
  $$
    v(r_{0})= \sup_{\lambda\in\Lambda} E\left[R(\rho(\tau_{\lambda}))-\int_{\rho(0)}^{\rho(\tau_{\lambda})} c(r) \lambda(r)\,\frac{dr}{1-r} \bigg|\rho(0)=r_{0}\right].
  $$
  Indeed, $\rho(\tau_{\lambda})$ is independent of $\lambda$---when all agents use the same control, their ranking is given by the ranking of the $Z^{i}$. On the other hand, $c\lambda\in\Lambda$ if and only if $\lambda\in\Lambda$, and hence $v$ is independent of $c$ in equilibrium. Intuitively, a higher cost leads to a smaller optimal effort, but since this holds for all agents, the equilibrium state $\rho$ is slowed down to the extent that the reduced effort results the same reward.
\end{remark}

The equilibrium value function of any agent has a surprising interpretation: it can be compared to a deal where the agent pays no cost for his (constant) effort but is given the handicap of running at half the intensity of the competitors.

\begin{proposition}\label{pr:valueProbabRepresent}
  The equilibrium value function $v$ of~\eqref{eq:value} coincides with the value function of an agent whose effort is fixed at $\lambda\equiv\lambda_{0}\in(0,\infty)$ and is charged zero cost, while all other agents use $\lambda\equiv 2\lambda_{0}$:
  $$
    v(r)= E[R(\rho(\tau))|\rho(0)=r],\quad\mbox{where $\tau\sim Exp(\lambda_{0})$ and $\rho'(t)=2\lambda_{0}(1-\rho(t))$.}
  $$
In particular, $v(0)=E[R(1-e^{-2\tau})]$ for $\tau\sim Exp(1)$.
\end{proposition}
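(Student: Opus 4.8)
The plan is to evaluate the right-hand side directly: under the stated dynamics, $\rho$ is a deterministic function, so $R(\rho(\tau))$ is just a function of the single random variable $\tau$. First I would solve the ODE $\rho'(t)=2\lambda_{0}(1-\rho(t))$ with $\rho(0)=r$; rewriting it as $(1-\rho)'=-2\lambda_{0}(1-\rho)$ gives $1-\rho(t)=(1-r)e^{-2\lambda_{0}t}$, i.e.\ $\rho(t)=1-(1-r)e^{-2\lambda_{0}t}\in[r,1)$ for every finite $t$. Since $\tau<\infty$ $P$-a.s., we have $\rho(\tau)<1$ a.s., so the convention $\rho(\infty):=1$ and the value of $R$ at $r=1$ are irrelevant here.

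Next I would compute the law of $\rho(\tau)$ for $\tau\sim Exp(\lambda_{0})$. For $y\in[r,1)$,
$$
  P\{\rho(\tau)\le y\}=P\bigl\{e^{-2\lambda_{0}\tau}\ge\tfrac{1-y}{1-r}\bigr\}
  =1-e^{-\lambda_{0}\cdot\frac{1}{2\lambda_{0}}\log\frac{1-r}{1-y}}=1-\sqrt{\tfrac{1-y}{1-r}},
$$
using $P\{\tau\le s\}=1-e^{-\lambda_{0}s}$ and the cancellation $\lambda_{0}\cdot\frac{1}{2\lambda_{0}}=\frac12$. Differentiating, $\rho(\tau)$ has density $y\mapsto\frac{1}{2\sqrt{(1-r)(1-y)}}$ on $[r,1)$ (which indeed integrates to $1$). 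Hence
$$
  E[R(\rho(\tau))\mid\rho(0)=r]=\int_{r}^{1}R(y)\,\frac{dy}{2\sqrt{(1-r)(1-y)}}
  =\frac{1}{2\sqrt{1-r}}\int_{r}^{1}\frac{R(y)}{\sqrt{1-y}}\,dy=v(r),
$$
which is exactly \eqref{eq:value}. Equivalently, one can substitute $y=\rho(t)$ in $E[R(\rho(\tau))]=\int_{0}^{\infty}\lambda_{0}e^{-\lambda_{0}t}R(\rho(t))\,dt$, using $dt=dy/(2\lambda_{0}(1-y))$ and $e^{-\lambda_{0}t}=\sqrt{(1-y)/(1-r)}$, to arrive at the same integral. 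The final assertion is then the special case $r=0$, $\lambda_{0}=1$, where $\rho(t)=1-e^{-2t}$ and so $v(0)=E[R(1-e^{-2\tau})]$ for $\tau\sim Exp(1)$.

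The computation is elementary and I do not expect a genuine obstacle; the only points requiring care are the minor bookkeeping of the change of variables and the remark that $\rho(\tau)<1$ a.s.\ so the endpoint behaviour of $R$ does not enter. The one conceptual item I would highlight is the source of the exponent $\tfrac12$ (equivalently, the square roots in \eqref{eq:value}): it is precisely the ratio $\lambda_{0}/(2\lambda_{0})$ of the agent's own arrival intensity to the intensity driving the state, which is the asserted ``handicap by a factor two''. It is also worth noting that the value is independent of $\lambda_{0}$, consistent with the scaling invariance already observed in Remark~\ref{rk:valueIndeCost}.
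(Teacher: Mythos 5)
Your proposal is correct and follows essentially the same route as the paper's proof: solve the ODE to get $\rho(t)=1-(1-r)e^{-2\lambda_0 t}$, write $E[R(\rho(\tau))]$ as an integral against the exponential density, and change variables to recover \eqref{eq:value}. Your explicit computation of the law of $\rho(\tau)$ (density $\frac{1}{2\sqrt{(1-r)(1-y)}}$ on $[r,1)$) is just a slightly more detailed presentation of the same change of variables, and your checks (e.g.\ $\rho(\tau)<1$ a.s.) are accurate.
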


The following result shows that the unique equilibrium of Theorem~\ref{th:equilibrium} is stable with respect to the reward scheme.

\begin{proposition}\label{pr:stability}
  Let $R_{n},R$ be reward schemes such that $R_{n}\to R$ pointwise. Then the corresponding equilibrium optimal controls also converge pointwise, whereas the equilibrium value functions and state processes converge uniformly.
\end{proposition}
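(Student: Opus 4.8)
The plan is to exploit the explicit formulas from Theorem~\ref{th:equilibrium}. First I would establish convergence of the equilibrium optimal controls $\lambda_n^* \to \lambda^*$ pointwise. By \eqref{eq:equilibOptControl}, each $\lambda_n^*$ is built from $R_n$ and the integral operator $R \mapsto \int_r^1 R(y)(1-y)^{-1/2}\,dy$. Since all reward schemes in question are decreasing with $R_n(0)$ bounded (indeed, if $R_n \to R$ pointwise with each $R_n$ decreasing, then $R_n(0) \to R(0) < \infty$, so the family is uniformly bounded on compacts after a tail-truncation argument — one must be slightly careful near $r=1$, but $R_n \le R_n(0)$ and $(1-y)^{-1/2}$ is integrable on $[0,1]$), dominated convergence gives $\int_r^1 R_n(y)(1-y)^{-1/2}\,dy \to \int_r^1 R(y)(1-y)^{-1/2}\,dy$ for each fixed $r<1$. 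Combined with $R_n(r)\to R(r)$ and the continuity and positivity of $1/c$, formula \eqref{eq:equilibOptControl} yields $\lambda_n^*(r)\to\lambda^*(r)$ for a.e.\ $r$ (everywhere that $R$ is continuous).

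Next I would handle the value functions. From \eqref{eq:value}, $v_n(r) = \tfrac{1}{2\sqrt{1-r}}\int_r^1 R_n(y)(1-y)^{-1/2}\,dy$, and the same dominated convergence argument gives pointwise convergence of $v_n \to v$. To upgrade this to uniform convergence, I would use that each $v_n$ is continuous on $[0,1]$ (including at $r=1$, where $v_n(1)=R_n(1)$ by the left-continuity assumption and an elementary estimate), that the $v_n$ are uniformly bounded, and — most usefully — that monotonicity can be brought to bear: $v_n$ is itself a decreasing function of $r$ (this should follow from $R_n$ decreasing via the representation in Proposition~\ref{pr:valueProbabRepresent}, $v_n(r) = E[R_n(\rho(\tau))\mid\rho(0)=r]$, since starting from a larger $r$ stochastically advances the rank $\rho(\tau)$). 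Pointwise convergence of monotone functions to a continuous limit on a compact interval is uniform (a Dini-type / Pólya's theorem argument), which settles the value functions.

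For the state processes, I would pass to the Kolmogorov ODE \eqref{eq:kolmogorov}. Write $\rho_n$ for the equilibrium state associated with $\lambda_n^*$. The vector field $(t,x)\mapsto \lambda_n^*(x)(1-x)$ converges to $\lambda^*(x)(1-x)$ for a.e.\ $x$, and I would want a uniform bound to invoke a continuous-dependence result for ODEs. Here the natural move is to reparametrize by rank as in Remark~\ref{rk:valueIndeCost}, or simply to note $\rho_n(t) = 1 - \exp(-\int_0^t \lambda_n^*(\rho_n(s))\,ds)$-type bounds keep $\rho_n$ away from $1$ on compact time intervals only if $\lambda_n^*$ is bounded — which it is, since $\lambda_n^* \le R_n(0)/(2c) \le (\sup_n R_n(0))/(2\inf c)$. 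With a uniform Lipschitz-in-$x$ bound failing in general (the controls are only piecewise Lipschitz), I would instead use an $L^1$-type stability estimate: a Gronwall argument applied to $|\rho_n(t)-\rho(t)|$ controlled by $\int_0^t |\lambda_n^*(\rho(s)) - \lambda^*(\rho(s))|(1-\rho(s))\,ds$ plus a Lipschitz term, where the first integral tends to $0$ by dominated convergence along the (absolutely continuous, hence null-set-avoiding) trajectory $\rho$. This yields $\rho_n \to \rho$ uniformly on compacts; uniform convergence on all of $[0,\infty)$ then follows because $\rho_n(t), \rho(t) \to 1$ uniformly in $n$ as $t\to\infty$, using the uniform positivity of $\lambda_n^*$ near the relevant range (or, if $R \equiv 0$ near $1$ so that the state stalls, a separate elementary comparison).

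The main obstacle is the last step: the feedback controls $\lambda_n^*$ are merely piecewise Lipschitz with no uniform modulus, so the textbook continuous-dependence theorem does not apply off the shelf. The fix is to measure the perturbation \emph{along the fixed limiting trajectory} $\rho$ — where one only needs the a.e.\ convergence $\lambda_n^* \to \lambda^*$ together with a uniform bound and a Gronwall inequality in which only $\lambda^*$ (not $\lambda_n^*$) needs a local Lipschitz property on the relevant piece. A secondary subtlety is uniformity of all convergences up to $r=1$ or $t=\infty$, handled by the left-continuity hypothesis on $R$ and uniform boundedness/monotonicity as indicated above.
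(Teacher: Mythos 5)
Your treatment of the controls and the value functions is correct and close in spirit to the paper's: both rest on the explicit formulas \eqref{eq:equilibOptControl} and \eqref{eq:value} together with the uniform bound $\sup_n R_n(0)<\infty$. Your route to uniformity of $v_n\to v$ --- monotonicity of each $v_n$ (via Proposition~\ref{pr:valueProbabRepresent}) plus continuity of the limit $v$ on $[0,1]$ and P\'olya's theorem --- is a clean alternative to the paper's, which instead first upgrades $R_n\to R$ to uniform convergence on each continuity interval of $R$ (again by monotonicity) and pushes that through the integral formula. A minor point: since $R_n(r)\to R(r)$ at \emph{every} $r$ by hypothesis, \eqref{eq:equilibOptControl} gives $\lambda_n^*(r)\to\lambda^*(r)$ at every $r$, so your restriction to continuity points of $R$ is unnecessary (and would deliver slightly less than the proposition claims).

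The state-process step is where your sketch has a genuine problem, and where the paper takes a different route. Your Gronwall decomposition cannot simultaneously (a) place the perturbation term along the fixed trajectory $\rho$, so that a.e.\ convergence of $\lambda_n^*$ suffices, and (b) require a Lipschitz modulus only of $\lambda^*$. If you write the difference of vector fields as $[\lambda_n^*(\rho(s))-\lambda^*(\rho(s))](1-\rho(s))$ plus an increment term, that increment is $\lambda_n^*(\rho_n(s))(1-\rho_n(s))-\lambda_n^*(\rho(s))(1-\rho(s))$ and needs a Lipschitz bound on $\lambda_n^*$ \emph{uniform in $n$}, which you have already conceded is unavailable; the alternative split evaluates the perturbation at $\rho_n(s)$, where pointwise convergence is not enough. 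Moreover $\lambda^*$ itself inherits the jumps of $R$, so even the limiting field is only piecewise Lipschitz and the textbook Gronwall argument does not apply globally in any case. The paper sidesteps all of this: the uniform bound on $(\lambda_n^*)$ gives a uniform Lipschitz bound on $(\rho_n)$ via \eqref{eq:kolmogorov}; Arzel\`a--Ascoli extracts a uniformly convergent subsequence $\rho_n\to\bar\rho$; the limit is identified as a solution of \eqref{eq:kolmogorov} for $\lambda^*$ on each interval of Lipschitz continuity of $R$, using the locally uniform convergence of $\lambda_n^*$ there; and the uniqueness assertion of Lemma~\ref{le:kolmogorovEqn} forces $\bar\rho=\rho$, after which a subsequence argument upgrades to the full sequence. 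I would replace your Gronwall step by this compactness-plus-identification argument, or at least run Gronwall separately on each of the finitely many Lipschitz pieces and control the crossings of the breakpoints by hand.
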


\subsection{Examples with Closed-Form Solutions}

In this section, we present a family of explicitly solvable examples. Given a total reward budget $B=\int_0^1 R(r)\,dr\geq0$, the family has a cut-off parameter $\alpha\in(0,1]$ indicating that no reward will be paid to agents ranked lower than~$\alpha$, as well as a shape parameter $q\ge 0$. The general form is then given by
\[
  R(r)=\kappa(1-r)^q \1_{[0,\alpha]}(r), \qquad \kappa=\frac{B(1+q)}{1-(1-\alpha)^{1+q}};
\] 
the constant $\kappa$ is chosen such that $B=\int_0^1 R(r)\,dr$.
We notice that a larger value of $q$ indicates that a larger portion of the reward budget is paid to highly ranked player, whereas $q=0$ corresponds to a uniform distribution of the reward among the top $\alpha$ ranks.
For such a reward, the value function and the optimal effort of Theorem~\ref{th:equilibrium} admit closed-form solutions:
\[
  v(r)=\frac{\kappa}{(1+2q)}\left((1-r)^q-(1-\alpha)^q\sqrt{\frac{1-\alpha}{1-r}}\right)^+,
\]
\[
  \lambda^*(r)=\1_{\{r\le \alpha\}}\frac{\kappa}{2c(r)(1+2q)}\left(2q(1-r)^q+(1-\alpha)^q\sqrt{\frac{1-\alpha}{1-r}}\right).
\]
In the boundary case $\alpha=1$, the Lipschitz assumption of Theorem~\ref{th:equilibrium} is not satisfied when $0<q<1$. However, one can check the indicated formulas by direct computation in this case.

In general, the c.d.f.\ $F_{\tau_{\lambda^*}}(t)=\rho(t)$ of any agent's equilibrium completion time can be computed numerically by solving the Kolmogorov equation~\eqref{eq:kolmogorov} for~$\rho$. Inverting the equilibrium state process also gives rise to the quantile $T_\beta = \inf\{t:\, \rho(t)\geq \beta\}$; that is, the time until a $\beta$-proportion of the players has reached the goal. In the following special cases, these quantities can be obtained in closed form.

\subsubsection{Power Reward Without Cut-off}

This case corresponds to $\alpha=1$, where we also assume that the cost $c$ is constant. Then the above formulas specialize to 
\[
  v(r)=\frac{B(1+q)}{1+2q}(1-r)^q, \qquad \lambda^*(r)=\frac{Bq(1+q)}{c(1+2q)}(1-r)^q,
\]
and we can also solve for 
$
  F_{\tau_{\lambda^*}}(t)=\rho(t)=1-\big(1+\frac{Bq^2(1+q)}{c(1+2q)}t\big)^{-\frac{1}{q}}
$
and its $\beta$-quantile
$
  T_\beta=\frac{c(1+2q)}{Bq^2(1+q)}\left[(1-\beta)^{-q}-1\right].
$
We see that the equilibrium value $v$ is decreasing in $q$. That is, each individual is worse off if the reward scheme heavily favors the highly-ranked players; this can be attributed to the cost caused by the large effort level $\lambda^{*}$ in the beginning of the competition. We also observe that $\lambda^{*}$ is decreasing in $r$ so that agents decrease their effort once the higher ranks are filled.


\subsubsection{Uniform Reward with Cut-off}

This case corresponds to $q=0$, and we again assume that the cost $c$ is constant. The general formulas now specialize to
\[
  v(r)=\frac{B}{\alpha}\left(1-\sqrt{\frac{1-\alpha}{1-r}}\right)^+, 
  \qquad
  \lambda^*(r)=\1_{\{r\le \alpha\}}\frac{B}{2c \alpha}\sqrt{\frac{1-\alpha}{1-r}}.
\]
We also have
$F_{\tau_{\lambda^*}}(t)= \rho(t)= 1-\left(1-\frac{B\sqrt{1-\alpha}}{4c\alpha}t\right)^2$ for $t\le T_{\alpha}$ and 
$F_{\tau_{\lambda^*}}(t)= \rho(t)=\alpha$ for $t> T_{\alpha}$,
where
\begin{equation}\label{eq:quantileUniformCutoff}
  T_{\alpha}=\frac{4c\alpha(1-\sqrt{1-\alpha})}{B\sqrt{1-\alpha}},
\end{equation}
and then the general quantile is
$T_{\beta}=\frac{4c\alpha (1-\sqrt{1-\beta})}{B\sqrt{1-\alpha}}$ for $\beta\le \alpha$ and $T_{\beta}=\infty$ for $\beta> \alpha$.
In contrast to the case $\alpha=1$, we see that $\lambda^*$ is increasing in $r$ for $r\leq\alpha$: as the race progresses, the agents compete for the remaining reward and increase their effort up to the time when an $\alpha$-proportion of agents has reached the goal, and then the remaining players give up; cf.\ Figure~\ref{fig:opt_effort2}.

\begin{figure}[H]
\centering
\vspace{1em}
\includegraphics[height=4.5cm]{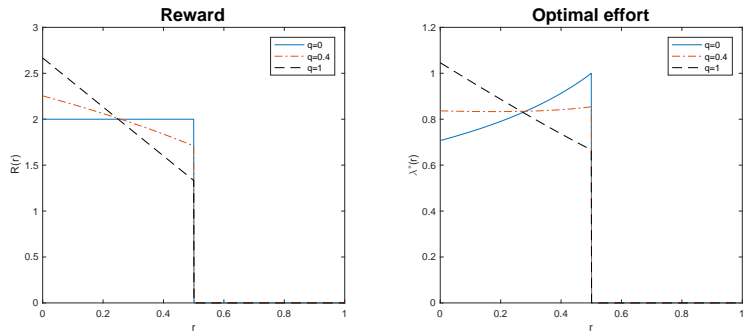}
\caption{Optimal effort under power reward with cut-off $\alpha=0.5$, assuming $B=1$ and $c\equiv 1$.}
\label{fig:opt_effort2}
\end{figure}

\subsection{Staircase Reward}

Consider a reward scheme $R$ and a cost coefficient $c$ of the staircase form
\begin{align*}
  R=R_1 \1_{[r_0,r_1]}+\sum_{j=2}^n R_j \1_{(r_{j-1},r_j]}, \qquad
  c=c_1 \1_{[r_0,r_1]}+\sum_{i=2}^n c_j \1_{(r_{j-1},r_j]},
\end{align*}
where $R_1\ge R_2 \ge \cdots \ge R_n\geq0$ and $0=r_0<r_1<\cdots<r_n=1$ are constants. The formulas~\eqref{eq:equilibOptControl} and~\eqref{eq:value} then yield that for $r_{j-1}<r\le r_j$,
\begin{align*}
  v(r)&=R_j+\frac{1}{\sqrt{1-r}}\left[-R_j\sqrt{1-r_j}+\sum_{k=j+1}^n R_k \left(\sqrt{1-r_{k-1}}-\sqrt{1-r_k}\right)\right],\\
  \lambda^*(r)&=\frac{1}{2c_j\sqrt{1-r}}\left[R_j\sqrt{1-r_j}-\sum_{k=j+1}^n R_k \left(\sqrt{1-r_{k-1}}-\sqrt{1-r_i}\right)\right].
\end{align*}
We claim that the equilibrium state $\rho$ is given by
\begin{equation}\label{eq:staircaseState}
  \rho(t)=1-\left(\sqrt{1-r_{j-1}}-\frac{A_j}{4c_j}(t-t_{j-1})\right)^2,\quad t_{j-1}\leq t \leq t_{j},
\end{equation}
where
$A_j=R_j\sqrt{1-r_j}-\sum_{k=j+1}^n R_k (\sqrt{1-r_{k-1}}-\sqrt{1-r_k})$ and
$t_{j}$ is recursively defined by $t_j=t_{j-1}+\frac{4c_j}{A_j}(\sqrt{1-r_{j-1}}-\sqrt{1-r_j})$ and $t_0=0$.
This is to be read with the convention that $1/0=\infty$; indeed, we have $A_{j}=0$ and $t_{j}=\infty$ if (and only if) $R_{j}=R_{j+1}=\cdots=R_{n}$. 
To see \eqref{eq:staircaseState}, we may solve the ODE~\eqref{eq:kolmogorov} successively for each interval $[r_{j-1},r_{j}]$. Let $t_{0}=0$. Suppose we have already found $t_0,\ldots, t_{j-1}$ and $\rho(t)$ for $t\in [0,t_{j-1}]$. Then the ODE on the $j$th interval reads
$\rho'(t)=\frac{A_j}{2c_j}\sqrt{1-\rho(t)}$ with initial condition $\rho(t_{j-1})=r_{j-1}$
and the solution is given by~\eqref{eq:staircaseState}, whereas $t_{j}$ is determined through the condition $\rho(t_{j})=r_{j}$.

Finally, let $\beta\in(0,1]$ be given. By adding $\beta$ to the grid if necessary, we may assume without loss of generality that $\beta=r_{j_0}$ for some $j_0\in\{1,\ldots,n\}$, and then the $\beta$-quantile is
$
  T_{\beta}=t_{j_0}=\sum_{j=1}^{j_0}\frac{4c_j}{A_j}(\sqrt{1-r_{j-1}}-\sqrt{1-r_j}).
$

\section{Mean Field Principal--Agent Problem}\label{se:MFprincipal}

We have seen that for a given reward scheme $R$, there exists a unique (deterministic) equilibrium  state $\rho$ and thus for $\alpha\in(0,1]$, the time 
$$
  T_{\alpha}(R) = \inf\{t\geq0:\, \rho(t)\geq \alpha\}\in (0,\infty]
$$
is deterministic and well-defined. This is the time until an $\alpha$-proportion of the population has reached the goal, or equivalently, $T_{\alpha}$ is the $\alpha$-quantile of the distribution of the equilibrium arrival time $\tau^{*}$.

In this section, we fix $\alpha\in(0,1)$ and the total reward budget $B>0$, and ask for a 
reward scheme $R$ which minimizes $T_{\alpha}(R)$ subject to the constraint that $\int_{0}^{1} R(r)\,dr\leq B$.
This corresponds to a principal--agent problem in the second-best sense: the planner can set the reward for the agents, but cannot dictate their choice of controls. The principal considers her project completed when an $\alpha$-proportion of the agents have reached their goal, and aims to find the minimal completion time
\begin{equation}\label{eq:optimalContractValue}
  T^{*}_{\alpha} = \inf_{R\in\cR:\, \int R(r)\,dr\leq B} T_{\alpha}(R),
\end{equation}
where $\cR$ is the set of all reward schemes.
We remark that for $\alpha=1$, we have $T_{\alpha}(R)=\infty$ for all $R$, whence we do not consider this case. On the other hand, $T_{\alpha}^{*}<\infty$ for all $\alpha\in(0,1)$ as this is already accomplished by the uniform reward $R$ with cut-off at $\alpha$; cf.~\eqref{eq:quantileUniformCutoff}. 

An additional assumption on the cost coefficient $c$ is needed for our result:
\begin{equation}\label{eq:assumptCost}
  r\mapsto \frac{c(r)(1-r)}{2-r}\quad \mbox{is decreasing}.
\end{equation}
This assumption is discussed in more detail in Remark~\ref{rk:cAssumt} below.
The solution to the principal's problem is then given as follows.

\begin{theorem}\label{th:principal}
  Let $c$ satisfy~\eqref{eq:assumptCost}. Given a  reward budget $B>0$ and $\alpha\in(0,1)$, there is an a.e.\ unique optimal reward scheme $R^{*}$ attaining the minimal completion time $T^*_{\alpha}$ of~\eqref{eq:optimalContractValue}, given by
  \begin{equation}\label{eq:optimalR}
    R^*(r)= \frac{B}{C}\left\{\sqrt{\frac{c(r)}{2-r}}+\frac{1}{2}\int_r^\alpha \frac{1}{1-s}\sqrt{\frac{c(s)}{2-s}}ds\right\} \1_{[0,\alpha]}(r),
  \end{equation}
  and the minimal completion time is
  \begin{equation}\label{eq:optimalTime}
    T^*_{\alpha}=\frac{4 C^2}{B},\quad\mbox{where}\quad 
  C=\frac{1}{2}\int_0^\alpha \frac{\sqrt{c(r)(2-r)}}{1-r}\,dr.
  \end{equation}
  The corresponding equilibrium effort is
  \[
    \lambda^*(r)=\frac{B}{2C}\frac{1}{\sqrt{(2-r)c(r)}}\1_{[0,\alpha]}(r).
  \]  
  In the particular case where the cost $c$ is constant, we have 
  \begin{align*}
    R^*(r)&= \frac{B}{C'}\left\{\frac{1}{\sqrt{2-r}}+\frac{1}{2}\log \frac{(1+\sqrt{2-\alpha})(1-\sqrt{2-r})}{(1-\sqrt{2-\alpha})(1+\sqrt{2-r})}\right\}\1_{[0,\alpha]}(r),\\
    T^*_{\alpha}&=\frac{4 cC'^2}{B}, \\
    C'&=\frac{C}{\sqrt{c}}=\sqrt{2}-\sqrt{2-\alpha}+\frac{1}{2}\log\frac{(1+\sqrt{2-\alpha})(1-\sqrt{2})}{(1-\sqrt{2-\alpha})(1+\sqrt{2})}.
  \end{align*}
\end{theorem}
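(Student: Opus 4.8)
The plan is to recast the principal's problem~\eqref{eq:optimalContractValue} as a one-dimensional calculus-of-variations problem solvable by the Cauchy--Schwarz inequality. The starting point is to trade the unknown $R$ for the unknown $v$: differentiating~\eqref{eq:value} gives the one-to-one correspondence $R(r)=v(r)-2(1-r)v'(r)$, whereupon~\eqref{eq:equilibOptControl} reads $\lambda^*(r)=-(1-r)v'(r)/c(r)$. Substituting $r=\rho(t)$ in the Kolmogorov equation~\eqref{eq:kolmogorov} then turns the completion time into
$$T_\alpha(R)=\int_0^\alpha\frac{dr}{\lambda^*(r)(1-r)}=\int_0^\alpha\frac{c(r)}{(1-r)^2\,(-v'(r))}\,dr\ \in(0,\infty],$$
with the convention that a non-integrable integrand gives $T_\alpha(R)=\infty$; such $R$ play no role because $T^*_\alpha<\infty$. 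In particular the objective depends on $R$ only through $v'$ on $[0,\alpha]$.

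Next I reduce to reward schemes supported on $[0,\alpha]$. Replacing a feasible $R$ by $\tilde R=R\1_{[0,\alpha]}$ preserves feasibility (it is still decreasing and the budget does not increase); and since $v_R=v_{\tilde R}+D(1-r)^{-1/2}$ on $[0,\alpha]$ for the constant $D=\tfrac12\int_\alpha^1 R(y)(1-y)^{-1/2}\,dy\ge0$, it only increases $-v'$ pointwise, so $T_\alpha(\tilde R)\le T_\alpha(R)$. We may thus assume $v(\alpha)=0$, and with $g:=-v'\ge0$ on $[0,\alpha]$ (so $v(r)=\int_r^\alpha g$), an integration by parts together with Fubini's theorem rewrites $\int_0^1 R\le B$ as $\int_0^\alpha(2-r)g(r)\,dr\le B$. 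The problem is now
$$\min_{g\ge0}\ \int_0^\alpha\frac{c(r)}{(1-r)^2 g(r)}\,dr\qquad\text{subject to}\qquad\int_0^\alpha(2-r)g(r)\,dr\le B,$$
and by Cauchy--Schwarz the product of the two integrals is at least $\big(\int_0^\alpha\tfrac{\sqrt{c(r)(2-r)}}{1-r}\,dr\big)^2=(2C)^2$. Hence the objective is $\ge 4C^2/B$, with equality exactly when the budget is saturated and $g(r)\propto\sqrt{c(r)}\,[(1-r)\sqrt{2-r}]^{-1}$; this determines $g^*$ uniquely, and back-substitution gives $v^*$, $\lambda^*$ and $R^*$ in the stated forms, so that $T^*_\alpha\ge4C^2/B$.

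What remains is the crux: to see that the candidate $R^*$ of~\eqref{eq:optimalR} is genuinely a reward scheme, which is the only place the hypothesis~\eqref{eq:assumptCost} is needed. A direct differentiation of~\eqref{eq:optimalR} shows that $(R^*)'\le0$ on $[0,\alpha]$ is equivalent to $\frac{d}{dr}\log\!\big(\tfrac{c(r)(1-r)}{2-r}\big)\le0$, i.e.\ to~\eqref{eq:assumptCost}; the downward jump to $0$ at $r=\alpha$, left-continuity at $1$, and Lipschitz regularity on $[0,\alpha]$ (using that $c$ is Lipschitz, bounded away from $0$, and $\alpha<1$) are immediate. Thus $R^*$ is feasible and attains $4C^2/B$, so $T^*_\alpha=4C^2/B$. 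Finally, a.e.\ uniqueness: any optimal $R$ must have $D=0$---otherwise $\tilde R$ would be strictly faster, using $T_\alpha(\tilde R)<\infty$---hence is supported on $[0,\alpha]$, and then its $g=-v'$ attains the Cauchy--Schwarz bound, which forces $g=g^*$ a.e.\ and therefore $R=R^*$ a.e. The closed forms in the constant-cost case follow by evaluating the integrals in~\eqref{eq:optimalR}--\eqref{eq:optimalTime} via the substitution $u=\sqrt{2-r}$. I expect the bookkeeping in the change of variables and the feasibility check to be the only non-routine parts.
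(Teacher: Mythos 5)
Your argument is correct, and while it shares the paper's overall architecture---reduce to rewards supported on $[0,\alpha]$, pass from $R$ to a transformed unknown in which the budget constraint becomes linear, and exhibit the same candidate---it closes the optimization step by a genuinely different and more elementary device. The paper works with $f(r)=R(r)\sqrt{1-r}-\int_r^\alpha\frac{R(s)}{2\sqrt{1-s}}\,ds$, which is exactly $2(1-r)^{3/2}g(r)$ in your notation, so the two reparametrizations are the same up to a known positive factor; but where the paper proves optimality by computing the one-sided directional derivative $\phi'(0)$ of the convex functional on the feasible set (which requires justifying differentiation under the integral) and gets uniqueness from strict convexity, you obtain the sharp lower bound $T_\alpha\ge 4C^2/B$ in one line from Cauchy--Schwarz applied to $\frac{c}{(1-r)^2g}$ and $(2-r)g$, with uniqueness falling out of the equality case. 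This buys a shorter, self-contained verification and makes transparent that~\eqref{eq:assumptCost} enters only when checking that the unconstrained (relaxed) minimizer $g^*$ corresponds to a genuinely decreasing $R^*$---the same place the paper uses it to check $f^*\in\cF$. Two points deserve slightly more care than your sketch gives them: (i) the identity $T_\alpha(R)=\int_0^\alpha\frac{c(r)}{(1-r)^2(-v'(r))}\,dr$ needs $\lambda^*>0$ a.e.\ on $[0,\alpha)$ so that $\rho$ is strictly increasing and invertible there (as in the paper's Lemma~\ref{le:cutoff}, this holds once $R>0$ on $[0,\alpha)$, and schemes violating it have $T_\alpha=\infty$ on both sides of the identity, so your convention is consistent); and (ii) the uniqueness step should note that the budget must be saturated and both Cauchy--Schwarz factors have positive finite integrals at any optimum, which pins down the proportionality constant. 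Neither is a gap, just bookkeeping to spell out.
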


Figure~\ref{fig:opt_rwd} (rendered in the Introduction) shows $R^{*}$, $\lambda^{*}$ and $\rho$ for constant cost coefficient $c$. As discussed, the key features of $R^{*}$ are the strict decrease and concavity on $[0,\alpha]$ and the discontinuity at $\alpha$. The equilibrium effort $\lambda^{*}$ is strictly increasing on $[0,\alpha]$.  For general $c$, the product $\sqrt{c}\lambda^{*}$ is increasing, but $\lambda^{*}$ need not be.

\begin{remark}\label{rk:cAssumt}
 Assumption~\eqref{eq:assumptCost} is satisfied in particular if $c$ is decreasing, which certainly holds in the applications we have in mind. If we suppose for simplicity that $c$ is differentiable, the assumption is equivalent to the derivative $c'$ satisfying
  $
    c'(r)\leq \frac{c(r)}{(2-r)(1-r)},
  $
  for which a sufficient condition is that $c'(r)\leq c(r)/2$. Thus, an increase of $c$ is permissible if it is  limited relative to the level of $c$. When the assumption is not satisfied, \eqref{eq:optimalR} no longer describes the solution to the principal--agent problem; in fact, \eqref{eq:optimalR} is not a decreasing function and hence not a reward scheme. The proof of Theorem~\ref{th:principal} shows that finding an optimal reward scheme can still be phrased as a convex optimization problem; however, the monotonicity constraint on $R$ is now binding which is an obstruction to finding an explicit solution.
\end{remark}

We may also ask the reverse question: given $\alpha\in(0,1)$ and a desired completion time $T>0$, what is the minimal budget enabling the principal to achieve $T$? The answer follows from Theorem~\ref{th:principal} by inverting~\eqref{eq:optimalTime}.

\begin{corollary}\label{co:optimalB}
Let $c$ satisfy~\eqref{eq:assumptCost}. Given $\alpha\in(0,1)$ and $T>0$, the minimal budget enabling the principal to achieve a completion time $T^{*}_{\alpha}\leq T$ is
  $
    B^*=4C^2/T,
  $
  where $C$ is given by~\eqref{eq:optimalTime}.
\end{corollary}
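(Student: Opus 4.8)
\textbf{Proof proposal for Corollary~\ref{co:optimalB}.}

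The plan is to read off the claim directly from Theorem~\ref{th:principal} by treating the budget $B$ as a free parameter. First I would observe that the constant $C=\frac12\int_0^\alpha \frac{\sqrt{c(r)(2-r)}}{1-r}\,dr$ appearing in~\eqref{eq:optimalTime} depends only on the cost coefficient $c$ and the target fraction $\alpha$, and in particular is independent of $B$; moreover $0<C<\infty$ since $c$ and $1/c$ are bounded and $\alpha<1$. Hence, writing $T^*_\alpha=T^*_\alpha(B)$ to emphasize the dependence on the budget, Theorem~\ref{th:principal} gives the closed form $T^*_\alpha(B)=4C^2/B$ for every $B>0$ (note that assumption~\eqref{eq:assumptCost} is in force, as it is carried over into the statement of the corollary, so Theorem~\ref{th:principal} applies).

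Next I would note that $B\mapsto T^*_\alpha(B)=4C^2/B$ is a continuous, strictly decreasing bijection from $(0,\infty)$ onto $(0,\infty)$. Therefore, for a prescribed completion time $T>0$, the set of admissible budgets is
\[
  \{B>0:\ T^*_\alpha(B)\leq T\}=\{B>0:\ 4C^2/B\leq T\}=\Big[\tfrac{4C^2}{T},\infty\Big).
\]
Its minimum is attained at $B^*:=4C^2/T$, at which value $T^*_\alpha(B^*)=T$ exactly; for any $B<B^*$ one has $T^*_\alpha(B)>T$, so no smaller budget suffices. This yields the stated $B^*=4C^2/T$, and simultaneously shows the minimum is achieved, with the optimal reward scheme given by plugging $B=B^*$ into~\eqref{eq:optimalR}.

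There is essentially no obstacle here: the only thing to be careful about is the logical direction---one should phrase ``minimal budget enabling $T^*_\alpha\leq T$'' as an infimum over the admissible set above and verify it is attained---but since $T^*_\alpha(B)$ is an explicit monotone function of $B$, this is immediate and requires no further estimates.
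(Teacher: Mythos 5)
Your proposal is correct and follows exactly the paper's (one-line) argument: the paper simply says the corollary follows from Theorem~\ref{th:principal} by inverting~\eqref{eq:optimalTime}, and you carry out that inversion carefully, noting that $C$ is independent of $B$ and that $B\mapsto 4C^2/B$ is a decreasing bijection. Nothing is missing.
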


\section{$N$-Player Problems}\label{se:NplayerProblems}

In this section, we study a version of the competition with finitely many players as well as a corresponding version of the principal--agent problem. The connections between these and the mean field formulations will be established in Section~\ref{se:convergence}.

\subsection{The $N$-Player Game}

We consider a game with $N$ players, where $N\geq1$ is a fixed integer. At any time $t$, each player $i$ observes the number $n$ of players that have already arrived, and chooses an effort level $\lambda_{i}(n)\in\R_{+}$. Suppose that player $i$ uses the feedback control $\lambda_{i}$ and all other players use a feedback control $\lambda_{-i}$. We denote by $\xi_{\lambda_i, \lambda_{-i}}(t)$ the number of players that have arrived  by time~$t$; i.e.,
\[
  \xi_{\lambda_i,\lambda_{-i}}(t)=\sum_{i=1}^N \1_{\{\tau^i\le t\}},
\]
where 
$\tau^i=\inf\{t\ge 0: \int_0^t \lambda_i(\xi_{\lambda_i,\lambda_{-i}}(s))\,ds=Z^i\}$ is the arrival time of player~$i$ and  
$\tau^j=\inf\{t\ge 0: \int_0^t \lambda_{-i}(\xi_{\lambda_i,\lambda_{-i}}(s))\,ds=Z^j\}$ is the arrival time of player $j\ne i$, for some 
independent exponential random variables $\{Z^1, \ldots, Z^N\}$ with unit rate. The existence of the state process is clear in this case; we may see $(\1_{\{\tau^i\le t\}},\xi_{\lambda_i,\lambda_{-i}})$ as a Markov pure jump process with values in $\{0,1\}\times\{0,1,\dots,N\}$. We emphasize that we now use the number rather than the fraction of arrived players as the state variable.

Let $R_n\in\R_{+}$ be the reward for finishing at the $n$-th place; as before, we assume that $(R_{n})_{1\leq n\leq N}$ is decreasing, and we convene that $R_{N}$ is paid to players that never arrive. Moreover, let $c_n>0$ be the cost coefficient when $n$ players have arrived. Then the objective of player $i$ is to maximize
\[
  J_i(\lambda_i;\lambda_{-i})=E\left[R_{\xi_{\lambda_i,\lambda_{-i}}(\tau^i)}-\int_0^{\tau^i} c_{\xi_{\lambda_i,\lambda_{-i}}(s)}\lambda_i^2(\xi_{\lambda_i,\lambda_{-i}}(s))ds\right]
\]
and $\lambda$ is a (symmetric) equilibrium optimal control if $\argmax_{\lambda_i} J_i(\lambda_i;\lambda)=\lambda$ for all $i$. For $0\leq n \leq N-1$, the value function of player $i$ before arriving is 
$$
v_n:=\sup_{\lambda_i}E\left[R_{\xi(\tau^i)}-\int_0^{\tau^i} c_{\xi(s)}\lambda_i^2(\xi(s))ds \bigg| \xi(0)=n\right]
$$
where $\xi:=\xi_{\lambda_i,\lambda_{-i}}$. We also convene that $v_{N}:=R_{N}$.

\begin{remark}\label{rk:vN}
  The last player never arrives. Indeed, once $N-1$ players have arrived, the remaining player achieves the optimal value $v_{N-1}$ by using the control $\lambda_{i}\equiv0$, and in fact $v_{N-1}=R_{N}=v_{N}$. This is due to the convention that $R_{N}$ is paid to players that never arrive. On the other hand, this convention is necessary in order to have existence of an equilibrium, since the same value is asymptotically achieved by using the control $\lambda_{i}\equiv\eps$ with small $\eps>0$.
\end{remark}

\begin{proposition}\label{pr:Nplayer}
  The $N$-player game has a unique Nash equilibrium. The equilibrium value function $(v_{n})_{0\leq n\leq N}$ is the unique solution of the backward recursion
  \begin{equation}\label{eq:valueN}
  v_n=\frac{R_{n+1}+2(N-n-1)v_{n+1}}{1+2(N-n-1)}, \quad 0\leq n \leq N-1; \quad v_{N}=R_{N}.
  \end{equation}
  The unique equilibrium optimal control is
  \begin{equation}\label{eq:optconN}
    \lambda^*(n)=\frac{R_{n+1}-v_n}{2c_n}, \quad 0\leq n \leq N-1.
  \end{equation}
\end{proposition}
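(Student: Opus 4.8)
The plan is to treat the $N$-player game as a family of single-agent control problems for the pure jump process $(\1_{\{\tau^i\le t\}},\xi)$ on $\{0,1\}\times\{0,\dots,N\}$: freeze the feedback control $\lambda$ of the other players, solve the representative agent's best-response problem by dynamic programming, and then impose the consistency condition that this best response equals $\lambda$.

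\textbf{Step 1 (Bellman equation and verification).}
Fix a feedback control $\lambda$ used by every player other than $i$. When $n\le N-1$ players have arrived and $i$ has not, there are $N-1-n$ other non-arrived players, so $\xi$ jumps from $n$ to $n+1$ at rate $(N-1-n)\lambda(n)$, while $i$ arrives (obtaining reward $R_{n+1}$) at rate $\mu=\lambda_i(n)$. The associated Bellman equation for the pre-arrival value $w_n$ of player $i$ is
\begin{equation*}
  0=\sup_{\mu\ge0}\bigl\{-c_n\mu^2+\mu(R_{n+1}-w_n)\bigr\}+(N-1-n)\lambda(n)(w_{n+1}-w_n),\quad 0\le n\le N-1,
\end{equation*}
with the convention $w_N:=R_N$ and boundary value $w_{N-1}=R_N$ (the last player never arrives, cf.\ Remark~\ref{rk:vN}). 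Applying Dynkin's formula to the jump process, stopped at $\tau^i\wedge T$, and letting $T\to\infty$ --- all quantities are bounded since the state space is finite, and the boundary term $w_{\xi(T)}\1_{\{\tau^i>T\}}$ is controlled using $w_{N-1}=R_N$, which matches the convention that never-arriving players receive $R_N$ --- gives $J_i(\lambda_i;\lambda)\le w_n$ when $\xi(0)=n$, for every admissible $\lambda_i$, with equality when $\lambda_i(n)$ is the maximizer in the Bellman equation. As $\mu\mapsto-c_n\mu^2+\mu(R_{n+1}-w_n)$ is strictly concave, this maximizer is the unique $\mu^*(n)=(R_{n+1}-w_n)^+/(2c_n)$; hence the best response to $\lambda$ is unique, its value $v^\lambda$ is the unique solution of the Bellman equation, and any $\lambda$ that is a best response to itself satisfies $\lambda(n)=(R_{n+1}-v^\lambda_n)^+/(2c_n)$.

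\textbf{Step 2 (Existence).}
Define $(v_n)_{0\le n\le N}$ by the backward recursion~\eqref{eq:valueN}, which manifestly has a unique solution. A downward induction shows $v_n\le R_{n+1}$: $v_n$ is a convex combination of $R_{n+1}$ and $v_{n+1}$, and the inductive hypothesis together with monotonicity of $R$ gives $v_{n+1}\le R_{n+2}\le R_{n+1}$, while the base case $v_{N-1}=R_N=R_{(N-1)+1}$ is immediate. Hence $R_{n+1}-v_n\ge0$, so $\lambda^*(n):=(R_{n+1}-v_n)/(2c_n)$ of~\eqref{eq:optconN} is $\R_+$-valued, i.e.\ admissible. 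Substituting $\lambda=\lambda^*$ into the Bellman equation and using $\sup_{\mu\ge0}\{-c_n\mu^2+\mu(R_{n+1}-v_n)\}=(R_{n+1}-v_n)^2/(4c_n)$, the equation collapses to $0=\tfrac{R_{n+1}-v_n}{2c_n}\bigl(\tfrac{R_{n+1}-v_n}{2}+(N-1-n)(v_{n+1}-v_n)\bigr)$, which is exactly~\eqref{eq:valueN} (and trivially true when $R_{n+1}=v_n$). Thus $(v_n)$ solves the Bellman equation for the environment $\lambda^*$, so by Step~1 the unique best response to $\lambda^*$ is $\lambda^*$ itself: $\lambda^*$ is a symmetric Nash equilibrium with value $(v_n)$ and optimal control~\eqref{eq:optconN}.

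\textbf{Step 3 (Uniqueness) and the main obstacle.}
Let $\lambda$ be any symmetric equilibrium and $v^\lambda$ its value. Since player $i$'s rank at arrival satisfies $\xi(\tau^i)\ge n+1$ when $\xi(0)=n$ (and equals $N$ by convention if $\tau^i=\infty$), monotonicity of $R$ yields $R_{\xi(\tau^i)}\le R_{n+1}$ pathwise, whence $v^\lambda_n\le R_{n+1}$ for all $n$; so the positive part in $\lambda(n)=(R_{n+1}-v^\lambda_n)^+/(2c_n)$ from Step~1 is vacuous, and feeding this back into the Bellman equation shows --- exactly as in Step~2, the degenerate flat-reward tail ($R_{m+1}=\dots=R_N$, where $\lambda^*\equiv0$) being handled by a short separate induction --- that $v^\lambda$ solves~\eqref{eq:valueN}. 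Uniqueness of that recursion's solution forces $v^\lambda=(v_n)$ and hence $\lambda=\lambda^*$. I expect the main difficulty to lie in Step~1: making the verification argument rigorous in the presence of an infinite horizon and of trajectories along which player $i$ never arrives. This is handled by the stopping-and-limiting scheme above, using boundedness of $(w_n)$ and the identification $w_{N-1}=R_N$ with the never-arrival reward, after which dominated convergence delivers both the inequality and the equality for the candidate optimizer.
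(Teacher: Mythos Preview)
Your proof is correct and follows essentially the same route as the paper: write the Hamilton--Jacobi/Bellman equation for the representative player against a fixed environment $\lambda_{-i}$, read off the quadratic maximizer, impose the fixed-point condition $\lambda_i=\lambda_{-i}$ to obtain the linear recursion~\eqref{eq:valueN}, and verify conversely that this recursion with~\eqref{eq:optconN} defines an equilibrium. The paper's proof is terser---it invokes ``dynamic programming'' without spelling out the Dynkin-type verification or the degenerate flat-tail case that you isolate in Step~3---but the architecture is the same.
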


\subsection{The $N$-Player Principal--Agent Problem}

Next, we consider the $N$-player version of the mean field principal--agent problem introduced in Section~\ref{se:MFprincipal}. Given $n_0\in \{1,\ldots, N-1\}$ and a (nonnegative, decreasing) reward scheme $(R_{n})$, let
$$
  T_{n_0}=\inf\{t\ge 0:\xi_{\lambda^*}(t)= n_0\}
$$
be the (random) time until $n_0$ players have arrived, where the players use the unique equilibrium optimal control $\lambda^*$ for $(R_{n})$ and the (fixed, positive) cost coefficients~$(c_{n})$; cf.~\eqref{eq:optconN}. Below, we shall find it useful to write $\lambda^*_{n}$ rather than $\lambda^*(n)$ whenever we are in the $N$-player setting.

Given the per capita%
\footnote{A normalization of the budget is necessary for a convergence result as in the subsequent section. We have done that implicitly by seeing~$B$ as the total budget in the mean field limit and as per capita budget in the $N$-player setting. Equivalently, one could normalize the mass of the population by assigning mass $1/N$ to each agent in the $N$-player game and see $B$ as the total budget.
} reward budget $B>0$, the principal chooses a reward scheme $(R_n)$ such as to minimize the expected completion time $ET_{n_0}$, subject to the budget constraint $\sum_{n=1}^N R_n\le NB$. In analogy to~\eqref{eq:assumptCost}, we shall assume that $c^{N}$ satisfies
\begin{equation}\label{eq:assumptCostN}
  c^{N}_{n} \leq c^{N}_{n-1}\, \frac{(2N-2n+1)^{2}(2N-n-1)}{4(N-n-1)(N-n+1)(2N-n)} ,\quad n< n_0;
\end{equation}
again, this is satisfied e.g.\ when $n\mapsto c^{N}_{n}$ is constant or decreasing.

\begin{theorem}\label{thm:optRN}
  Let $c^{N}$ satisfy~\eqref{eq:assumptCostN} and define $y_{n_0}=0$,
  \[
    y_n=\sqrt{\frac{c_n N(N-n-1)}{(N-n)(2N-n-1)}}, \quad n<n_0.
  \]
  Given a per capita reward budget $B>0$, there is a unique optimal reward scheme $(R^{*}_{n})$ attaining the minimal expected completion time $ET^*_{n_0}$, given by
  \[
    R^*_n=\frac{B}{C}\left\{y_{n-1}+\frac{1}{2}\sum_{k=n-1}^{n_0-1}\frac{ y_k}{N-k-1}\right\} \1_{\{n\le n_0\}},
  \]  
  and the minimal expected completion time is
  \[
    ET^*_{n_0}=\frac{4C^2}{B},\quad\mbox{where}\quad C=\frac{1}{2\sqrt{N}}\sum_{n=0}^{n_0-1} \sqrt{\frac{c_n(2N-n-1)}{(N-n)(N-n-1)}}.
  \]
  The corresponding equilibrium optimal control is
  \[
    \lambda^*_{n}= \frac{B}{2C}\sqrt{\frac{ N(N-n-1)}{c_n(N-n)(2N-n-1)}} \1_{\{n<n_0\}}.
  \]  
\end{theorem}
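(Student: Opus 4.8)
The plan is to transcribe the argument behind Theorem~\ref{th:principal} into the discrete setting: I will turn the principal's problem into a finite-dimensional convex program in the \emph{increments} of the equilibrium value function, solve it with a Lagrange multiplier for the budget, and check that hypothesis~\eqref{eq:assumptCostN} is exactly what makes the monotonicity constraints non-binding.

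\textbf{Reducing the objective.} Under the equilibrium control $\lambda^*$ of Proposition~\ref{pr:Nplayer}, the process $\xi_{\lambda^*}$ is a pure birth chain: while $n<n_0$ players have arrived, the $N-n$ remaining players each fire at rate $\lambda^*_n$, so by the lack of memory of the exponential, the sojourn time in state $n$ is $\mathrm{Exp}\big((N-n)\lambda^*_n\big)$ and the sojourns are independent. Hence $T_{n_0}$ is a sum of independent exponentials and
\[
  ET_{n_0}=\sum_{n=0}^{n_0-1}\frac{1}{(N-n)\lambda^*_n},
\]
which is finite precisely when $\lambda^*_n>0$ for every $n<n_0$; so the minimization effectively ranges over such reward schemes (finiteness of the infimum being already witnessed by the uniform scheme with cut-off at $n_0$).

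\textbf{Change of variables.} Writing~\eqref{eq:valueN} as $R_{n+1}-v_n=2(N-n-1)(v_n-v_{n+1})$ and combining with~\eqref{eq:optconN} gives $\lambda^*_n=(N-n-1)(v_n-v_{n+1})/c_n$. A backward induction on~\eqref{eq:valueN} shows that $v$ is decreasing (each $v_n$ is a convex combination of $R_{n+1}$ and $v_{n+1}$, and $R_{n+1}\ge R_{n+2}\ge v_{n+1}$ by the induction hypothesis), so $x_n:=v_n-v_{n+1}\ge 0$. Moreover, replacing $R$ by $R\,\1_{\{n\le n_0\}}$ does not increase the budget and, by monotonicity of the recursion~\eqref{eq:valueN} in $(R,v)$, only decreases every $v_n$, hence only increases every $\lambda^*_n$ with $n<n_0$; thus we may assume $R_n=0$ for $n>n_0$, whence $v_n=0$ for $n\ge n_0$ and $v_n=\sum_{k=n}^{n_0-1}x_k$ for $n\le n_0$, while the scheme is recovered via $R_n=v_{n-1}+2(N-n)x_{n-1}$ for $1\le n\le n_0$. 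A direct computation turns the budget into the linear functional $\sum_{m=1}^{n_0}R_m=\sum_{n=0}^{n_0-1}(2N-n-1)x_n$ and the order constraints $R_n\ge R_{n+1}$ into $x_{n-1}\ge\frac{2(N-n-1)}{2N-2n+1}x_n$ for $1\le n<n_0$ (the constraint $R_{n_0}\ge R_{n_0+1}=0$ being automatic). The principal's problem becomes
\[
  \min_{x\ge 0}\ \sum_{n=0}^{n_0-1}\frac{c_n}{(N-n)(N-n-1)\,x_n}\qquad\text{subject to}\qquad \sum_{n=0}^{n_0-1}(2N-n-1)\,x_n\le NB
\]
together with the monotonicity constraints above; the objective is strictly convex on $\{x>0\}$ and all constraints are linear.

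\textbf{Solving and concluding.} Dropping the monotonicity constraints, a Lagrange multiplier $\nu>0$ for the (necessarily binding) budget constraint gives $x_n\propto\sqrt{c_n/[(2N-n-1)(N-n)(N-n-1)]}$; the budget equation then pins down the constant, yielding $C=\frac{1}{2\sqrt N}\sum_{n=0}^{n_0-1}\sqrt{c_n(2N-n-1)/[(N-n)(N-n-1)]}$ and $x_n=\frac{B}{2C}\,y_n/(N-n-1)$ with $y_n$ as in the statement. Substituting into $\lambda^*_n=(N-n-1)x_n/c_n$, into $R_n=\sum_{k=n-1}^{n_0-1}x_k+2(N-n)x_{n-1}$ and into $ET_{n_0}=\sum_{n<n_0}\big((N-n)\lambda^*_n\big)^{-1}$ produces the asserted formulas for $\lambda^*_n$, $R^*_n$ and $ET^*_{n_0}=4C^2/B$. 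Finally, from $x_n\propto\sqrt{c_n/[(2N-n-1)(N-n)(N-n-1)]}$,
\[
  \frac{x_{n-1}}{x_n}=\sqrt{\frac{c_{n-1}}{c_n}\cdot\frac{(N-n-1)(2N-n-1)}{(N-n+1)(2N-n)}},
\]
and $x_{n-1}/x_n\ge\frac{2(N-n-1)}{2N-2n+1}$ holds \emph{exactly} when~\eqref{eq:assumptCostN} does; so under~\eqref{eq:assumptCostN} the unconstrained minimizer is feasible, hence is the unique optimum by strict convexity, which gives the a.e.\ uniqueness of $R^*$.

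\textbf{Main difficulty.} The computational heart, and the place where errors are most likely to hide, is the change of variables in the second step: checking that the increments $x_n=v_n-v_{n+1}$ simultaneously linearize the budget and the order constraints with precisely the coefficients $2N-n-1$ and $\frac{2(N-n-1)}{2N-2n+1}$, and then verifying that the non-binding condition for the order constraints collapses exactly to~\eqref{eq:assumptCostN}. Everything else is a routine, if lengthy, manipulation of the resulting closed forms.
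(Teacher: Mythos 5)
Your proof is correct and follows essentially the same route as the paper: reduce to a finite-dimensional convex program via a change of variables to increments, solve the budget-constrained problem by Lagrange multipliers, and verify that~\eqref{eq:assumptCostN} is exactly the condition making the monotonicity constraints non-binding (the paper uses $x_n=R_{n+1}-v_n=2(N-n-1)(v_n-v_{n+1})$ instead of your $v_n-v_{n+1}$, a harmless rescaling). The only nitpicks: ``a.e.\ uniqueness'' is a slip in this discrete setting, and for \emph{global} uniqueness the cut-off reduction should note that a scheme paying positive reward beyond $n_0$ is \emph{strictly} suboptimal (it strictly raises $v_{n_0-1}$, hence strictly lowers $\lambda^*_{n_0-1}$), not merely weakly so.
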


\subsection{Size Effects}\label{se:sizeEffects}

We conclude this section with a brief discussion of the influence of the population size $N$ on the principal's problem, in two different ways. To make the problems comparable, we assume that $c^{N}_n\equiv c$ is a constant independent of $N$.

(i) First, we consider as above a principal with a given per capita budget $B$ aiming to minimize the expected time until an $\alpha$ proportion of the population has arrived. The left panel in Figure~\ref{fig:size-effect-A} shows a negative size effect: the minimum expected completion time $ET^{N}_{n_{0}}$ for $n_0=\ceil{\alpha N}$ is increasing in $N$; that is, an increase in population size \emph{adversely} affects the principal.

(ii) Second, we fix the total budget $K=NB$ and a completion \emph{head count} $n_0$. Then,
$ET^{N}_{n_0}=\frac{1}{NK}\big(\sum_{n=0}^{n_0-1}\big[\frac{c}{1-n/N} (1+\frac{1}{1-(n+1)/N} )\big]^{1/2}\big)^2$
is strictly decreasing in $N$ with limit equal to zero (right panel in Figure~\ref{fig:size-effect-A}). That is, the principal aiming for a fixed number of completions benefits from an increase in the population size. 

\begin{figure}
\centering
\includegraphics[height=4.5cm]{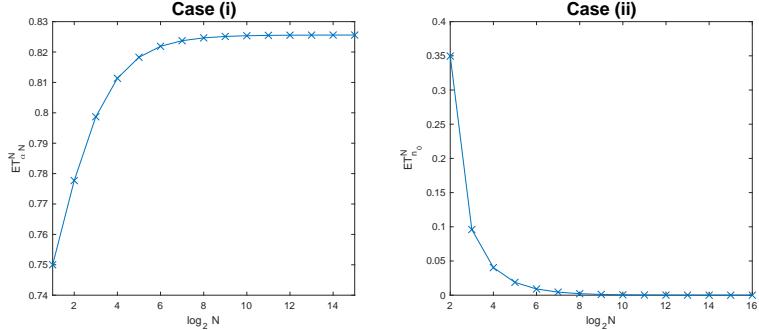}
\caption{The left panel shows a negative size effect when fixing $\alpha=0.5$ and $B=1$. The right panel shows a positive size effect when fixing $n_0=3$ and $K=2^{5}$.  In both panels, $c\equiv 1$ and the minimal expected completion time is plotted against $\log_2 N$.}
\label{fig:size-effect-A}
\end{figure}

\section{Convergence to the Mean Field}\label{se:convergence}

In this section, we show that the $N$-player competition and principal--agent problems converge to their mean field counterparts as $N\to\infty$.

\subsection{Convergence of the $N$-Player Equilibrium}

We consider the mean field setting of Section~\ref{se:equilib} with a fixed reward scheme~$R$ and cost coefficient $c$, as well as an $N$-player game with reward $(R^{N}_{n})$ and cost $(c^{N}_{n})$. Our aim is to show that if $R^{N}\to R$  and $c^{N}\to c$ in a suitable sense, then the corresponding equilibria converge.

If we start with a reward scheme $R:[0,1]\to \mathbb{R}_+$ in the mean field setting, an obvious choice for $R^{N}$ is the sampling $R^{N}_n=R\left(\frac{n}{N}\right)$. Since $R$ is decreasing, we have $\frac{1}{N}\sum_{n=1}^N R^{N}_n\le \int_0^1 R(r)\,dr$; i.e., this discretization may (and typically will) reduce the cumulative reward. Another choice is the moving average $R^{N}_n=N\int_{(n-1)/N}^{n/N} R(y)\,dy$ which preserves the reward. Next, we introduce a condition designed to cover either of these choices, and more.

Recall that $R:[0,1]\to \mathbb{R}_+$ is decreasing, piecewise Lipschitz continuous and left-continuous at $r=1$. Let $0=r_0<r_1<\ldots<r_m<r_{m+1}=1$ be a finite partition of $[0,1]$ such that $R$ is Lipschitz  on each interval $(r_{i-1},r_i)$. For our results, we shall assume that 
\begin{equation}\label{eq:discretizationR}
\sup_{r\in \bigcup_{i=1}^{m+1} I^{N}_i}\left|R^{N}_{\ceil{rN}}-R(r)\right|\le \frac{K}{N}
\end{equation}
for some constant $K$ independent of $N$, where $I^{N}_i=[r_{i-1}+1/N,r_i-1/N]$ for $i=1,\ldots,m$ and $I^{N}_{m+1}=[r_m+1/N,1]$. Similarly, we assume that%
\footnote{This covers, e.g., $c^{N}_{n}=c(n/N)$. We do not consider a more general convergence as in~\eqref{eq:discretizationR} since that would lead to more complicated statements below without substantially enhancing the scope.  
}
\begin{equation}\label{eq:discretizationc}
\sup_{r\in[0,1]}\left| c^{N}_{\ceil{rN}}-c(r)\right|\le \frac{K}{N}.
\end{equation}

The next result establishes that the $N$-player equilibrium converges to the mean field equilibrium as $N\rightarrow \infty$. Thus, it gives a second justification to the mean field formulation, apart from the direct derivation with a continuum of players as in Section~\ref{se:equilib}.

\begin{theorem} \label{thm:fwd-convergence}
  Let $R^{N},R$ and $c^{N},c$ satisfy~\eqref{eq:discretizationR} and~\eqref{eq:discretizationc}, and let $v^{N},v$ and $\lambda^{N},\lambda^{*}$ be the corresponding value functions and equilibrium optimal controls, respectively. Then
  \[
    \sup_{r\in[0,1]}|v^{N}_{\floor{rN}}-v(r)|=O(1/\sqrt{N}),\quad \sup_{r\in \bigcup_{i=1}^{m+1} I^{N}_i}|\lambda^{N}_{\floor{rN}}-\lambda^{*}(r)|=O(1/\sqrt{N}).
  \]
\end{theorem}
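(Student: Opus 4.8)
The plan is to compare the backward recursion \eqref{eq:valueN} for $(v^N_n)$ with the integral formula \eqref{eq:value} for $v$, treating the recursion as a discretization of the continuous problem. The first step is to rewrite \eqref{eq:valueN} in a form that telescopes. Setting $w^N_n := v^N_n \sqrt{1 - n/N}$ (or a closely related quantity tuned to the exact shape of the recursion), I expect the recursion to become an explicit discrete sum, namely a Riemann-type sum approximating $\frac12\int_r^1 R(y)(1-y)^{-1/2}\,dy$. Concretely, solving \eqref{eq:valueN} backwards gives $v^N_n$ as a weighted combination of $R^N_{n+1}, R^N_{n+2}, \dots, R^N_N$ with weights that are products of the factors $\tfrac{2(N-k-1)}{1+2(N-k-1)}$; identifying the telescoping structure of these products with $\sqrt{(1-k/N)/(1-n/N)}$-type ratios is the core algebraic step, analogous to the staircase computation already carried out in Section~\ref{se:equilib}.

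Once $v^N_{\floor{rN}}$ is expressed as an explicit sum, the convergence estimate $\sup_r |v^N_{\floor{rN}} - v(r)| = O(1/\sqrt N)$ will follow by bounding the discretization error in three pieces: (a) replacing $R^N_{\ceil{rN}}$ by $R(r)$, controlled by \eqref{eq:discretizationR} on the good set $\bigcup I^N_i$ and by the uniform boundedness of $R$ on the finitely many excluded intervals of total length $O(1/N)$; (b) the error between the discrete weights and the continuous kernel $(1-y)^{-1/2}$, which near $y=1$ is where a naive bound degrades — here I expect to lose a square root, producing the stated $O(1/\sqrt N)$ rather than $O(1/N)$, because the factor $\sqrt{1-n/N}$ vanishes at the top rank and the products behave like $\sqrt{(N-k)/(N-n)}$ with $O(1)$ multiplicative errors accumulating over the last $O(\sqrt N)$ terms; and (c) the Riemann-sum error for the integral, which is $O(1/N)$ away from $y=1$ and again $O(1/\sqrt N)$ near the singularity. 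The singularity at $r=1$ (equivalently, the behavior of the last few ranks) is the main obstacle, and it is exactly the reason the rate is $1/\sqrt N$ and not $1/N$; Remark~\ref{rk:vN} and Example~\ref{ex:counterExExistence} already flag that the top of the ranking is delicate.

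With the value-function estimate in hand, the control estimate is comparatively routine. From \eqref{eq:optconN} we have $\lambda^N_n = (R^N_{n+1} - v^N_n)/(2c^N_n)$, while \eqref{eq:equilibOptControl} gives $\lambda^*(r) = (R(r) - v(r))/(2c(r))$, using that the numerator in \eqref{eq:equilibOptControl} equals $R(r) - v(r)$ by \eqref{eq:value}. So on the good set $\bigcup_{i=1}^{m+1} I^N_i$, where $|R^N_{\ceil{rN}} - R(r)| = O(1/N)$ by \eqref{eq:discretizationR} and $|c^N_{\ceil{rN}} - c(r)| = O(1/N)$ by \eqref{eq:discretizationc}, the triangle inequality together with the lower bound $c^N_n \ge \inf 1/c > 0$ (so that division by $c^N_n$ is harmless) yields $|\lambda^N_{\floor{rN}} - \lambda^*(r)| \le C\big(|v^N_{\floor{rN}} - v(r)| + |R^N_{\ceil{rN}} - R(r)| + |c^N_{\ceil{rN}} - c(r)|\big) = O(1/\sqrt N)$. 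One should also check that the index shift between $\floor{rN}$ and $\ceil{rN}$ and between $n$ and $n+1$ costs only $O(1/N)$ on each Lipschitz piece, which is where restricting to the $I^N_i$ (rather than all of $[0,1]$) is used. A final bookkeeping remark: since $v$ need not be continuous at the partition points $r_i$, the value-function bound is genuinely claimed on all of $[0,1]$ only because $v^N$ inherits the same jumps in the limit; the uniform statement there should be read with the convention that $v^N_{\floor{rN}}$ and $v(r)$ jump at the same scaled locations up to $O(1/N)$, and the excluded intervals contribute $O(1/N)$ to the sup by boundedness.
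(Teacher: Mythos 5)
Your overall strategy is sound and is, at bottom, the same comparison of the discrete recursion \eqref{eq:valueN} with the continuous formula \eqref{eq:value}, but organized differently from the paper. You propose to \emph{unroll} the recursion into an explicit weighted sum of $R^N_{n+1},\dots,R^N_N$ and then match the product weights $\prod_k \frac{2(N-k-1)}{1+2(N-k-1)}$ against the kernel $\frac{1}{2\sqrt{(1-r)(1-y)}}$. The paper instead keeps the comparison local: it writes $v(n/N)$ itself in one-step form $v(\frac{n}{N})=f(\frac{n}{N})v(\frac{n+1}{N})+(1-f(\frac{n}{N}))R^N_{n+1}+E_{1,n}$ with $f(r)=\sqrt{(1-r-\delta)/(1-r)}$ (exploiting the exact self-similarity of \eqref{eq:value}), derives $\Delta_n\le\Delta_{n+1}+E_{1,n}+E_{2,n}$, and sums the local errors in three regimes (last $\lceil\sqrt N\rceil$ ranks, good ranks, the $O(m)$ ranks straddling partition points). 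The local route avoids ever estimating the products; your global route requires a uniform two-sided estimate of $\prod_{k=n}^{j-1}\frac{2(N-k-1)}{2(N-k)-1}$ against $\sqrt{(N-j)/(N-n)}$ including the degenerate tail $j\approx N$. That estimate is exactly where the whole difficulty lives, and in your write-up it is only ``expected'' rather than carried out; until the product asymptotics (relative error $O(1/(N-j))$ per partial product, degenerating near $j=N$) are actually established, the proof is a plan rather than a proof. Your diagnosis of where the $1/\sqrt N$ rate comes from (the last $O(\sqrt N)$ ranks, where the kernel $(1-y)^{-1/2}$ is singular) agrees with the paper, and your treatment of the control estimate via \eqref{eq:optconN}, \eqref{eq:equilibOptControl}, the lower bound on $c$, and restriction to the good set $\bigcup_i I^N_i$ is correct and matches the paper's.

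One genuine error in your last paragraph: $v$ is \emph{not} discontinuous at the partition points $r_i$. The formula \eqref{eq:value} smooths the jumps of $R$, and $|v'(r)|=|R(r)-v(r)|/(2(1-r))$ is bounded on $[0,1]$ (by $R(0)/(2(1-r_m))$ for $r\le r_m$ and by $K/2$ beyond), so $v$ is globally Lipschitz. This is precisely why the value-function bound can be stated uniformly on all of $[0,1]$ with no convention about matching jump locations, while only the control bound (whose formula involves $R$ itself) must be restricted to $\bigcup_i I^N_i$. The convention you invoke is unnecessary and suggests a misreading of which object carries the discontinuities.
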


\begin{remark}\label{rmk:better-rate}
If $R(r)=0$ on $(\alpha,1]$ for some $\alpha<1$, then the ODE~\eqref{eq:HJODE} is nondegenerate up to the boundary and the convergence rates in Theorem~\ref{thm:fwd-convergence} can be improved to $O(1/N)$; cf.\ the proof of the theorem. In particular, this holds for the optimal reward scheme $R^*$ of the principal's problem in Theorem~\ref{th:principal}.
\end{remark}

\subsection{Convergence and $\eps$-Optimality for the Principal}\label{subsec:eps-optimal-R}

Following Section~\ref{se:MFprincipal}, we fix a cost coefficient $c$ satisfying~\eqref{eq:assumptCost}, a target proportion $\alpha\in(0,1)$ and a budget $B>0$. We have seen in Theorem~\ref{th:principal} that in the mean field setting, there exists a unique reward scheme~$R^{*}$ which attains the minimal (deterministic) time $T^{*}_{\alpha}$ until an $\alpha$-proportion of the population has arrived. For the $N$-player situation with a given cost $c^{N}$ satisfying~\eqref{eq:assumptCostN} and per capita
budget $B$, we have seen in Theorem~\ref{thm:optRN} that there exists a unique reward scheme $R^{N}$ minimizing the expected time $ET^{N}_{n_{0}}$ until $n_{0}$ players have arrived. In the following result, we consider $n_{0}=\ceil{\alpha N}$, so that the proportion $n_{0}/N$ tends to $\alpha$, and show that if $c^{N}\to c$, the expected completion time as well as the corresponding reward schemes converge as $N\to\infty$.

\begin{theorem}\label{thm:principal-convergence}
  Let $c^{N}\to c$ in the sense of~\eqref{eq:discretizationc}. Then 
  \[
    \left|ET^{N}_{\ceil{\alpha N}} - T^*_{\alpha} \right|= O(1/N),\qquad \sup_{r\in[0,\alpha]}\left| R^{N}_{\ceil{r N}}-R^*(r)\right| = O(1/N).
  \]
\end{theorem}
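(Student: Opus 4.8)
The plan is to compare the two explicit solutions. Theorem~\ref{th:principal} gives $R^*$, $T^*_\alpha$ and the constant $C$ as integrals against the cost $c$, while Theorem~\ref{thm:optRN} gives $R^N_n$, $ET^N_{n_0}$ and the constant $C^N$ as sums against $c^N$ with $n_0=\ceil{\alpha N}$. So the whole argument reduces to showing that the $N$-player sums are Riemann-sum approximations of the mean field integrals, with error $O(1/N)$ — taking advantage of the fact that all quantities are \emph{explicit}, so there is no fixed-point or stability argument to run. First I would record the basic estimates: by~\eqref{eq:discretizationc}, $\sup_r|c^N_{\ceil{rN}}-c(r)|=O(1/N)$, and since $c$ is Lipschitz and bounded away from $0$ and $\infty$, the same holds for $\sqrt{c^N_{\ceil{rN}}}-\sqrt{c(r)}$ and for $1/\sqrt{c}$. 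I would also note that $c^N$ inherits uniform upper and lower bounds, so~\eqref{eq:assumptCostN} is a legitimate hypothesis to invoke and Theorem~\ref{thm:optRN} applies.

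Second, I would handle the constant $C$. Writing $C^N=\frac{1}{2\sqrt N}\sum_{n=0}^{n_0-1}\sqrt{\frac{c_n(2N-n-1)}{(N-n)(N-n-1)}}$, I would substitute $n=\floor{rN}$ and check that the summand equals $\frac{1}{1-r}\sqrt{c(r)(2-r)}\cdot(1+O(1/N))$ uniformly for $r$ in any compact subinterval of $[0,\alpha]$ that stays away from the endpoint (the factor $(N-n-1)$ in the denominator forces care only near $r=\alpha$, i.e. the last few terms, which I treat separately). Because the target proportion is $\alpha<1$, the terms with $n$ close to $n_0-1$ are uniformly bounded — the worst one has $N-n-1$ of order $N(1-\alpha)$ — so they are not actually singular; the only subtlety is that $n_0/N=\ceil{\alpha N}/N$ differs from $\alpha$ by $O(1/N)$, contributing at most one extra bounded term of size $O(1/N)$ after the prefactor $1/(2\sqrt N)\cdot\sqrt N$... more carefully, each summand after the $\frac{1}{2\sqrt N}$ is $O(1/N)$ relative to... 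I would instead pull out the Riemann-sum structure directly: $C^N=\frac{1}{2}\cdot\frac{1}{N}\sum_{n=0}^{n_0-1} g(n/N)+O(1/N)$ where $g(r)=\frac{\sqrt{c(r)(2-r)}}{1-r}$, and since $g$ is $C^1$ (hence Lipschitz) on $[0,\alpha]\subset[0,1)$ and $n_0/N\to\alpha$ with error $O(1/N)$, the midpoint/left Riemann sum converges to $\frac12\int_0^\alpha g(r)\,dr=C$ at rate $O(1/N)$. Hence $C^N=C+O(1/N)$, and since $C>0$, also $1/C^N=1/C+O(1/N)$. The completion-time claim $|ET^N_{\ceil{\alpha N}}-T^*_\alpha|=O(1/N)$ then follows immediately from $ET^N_{n_0}=4(C^N)^2/B$ and $T^*_\alpha=4C^2/B$.

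Third, for the reward schemes I would do the same termwise comparison. From Theorem~\ref{thm:optRN}, $R^N_n=\frac{B}{C^N}\{y_{n-1}+\frac12\sum_{k=n-1}^{n_0-1}\frac{y_k}{N-k-1}\}$ with $y_k=\sqrt{\frac{c_k N(N-k-1)}{(N-k)(2N-k-1)}}$; substituting $k=\floor{sN}$ one checks $y_k=\sqrt{\frac{c(s)(1-s)}{2-s}}+O(1/N)$ uniformly on compacts of $[0,1)$, and $\frac{1}{N-k-1}=\frac{1}{N}\cdot\frac{1}{1-s}+O(1/N^2)$, so $\frac12\sum_{k=n-1}^{n_0-1}\frac{y_k}{N-k-1}=\frac12\int_r^\alpha\frac{1}{1-s}\sqrt{\frac{c(s)(1-s)}{2-s}}\,ds+O(1/N)=\frac12\int_r^\alpha\frac{1}{1-s}\sqrt{\frac{c(s)}{2-s}}\,ds+O(1/N)$ with $r=n/N$, while $y_{n-1}=\sqrt{\frac{c(r)(1-r)}{2-r}}+O(1/N)=\sqrt{c(r)}\sqrt{\frac{1-r}{2-r}}+O(1/N)$. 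Comparing with~\eqref{eq:optimalR} I would note a mismatch in the leading term: \eqref{eq:optimalR} has $\sqrt{c(r)/(2-r)}$ whereas $y_{n-1}$ produces $\sqrt{c(r)(1-r)/(2-r)}$ — so I'd double check, but since the integrand in~\eqref{eq:optimalR} is $\frac{1}{1-s}\sqrt{c(s)/(2-s)}=\frac{1}{1-s}\cdot\frac{1}{\sqrt{1-s}}\sqrt{c(s)(1-s)/(2-s)}$, the $y_k/(N-k-1)$ sum must actually be a Riemann sum with a $\frac{1}{\sqrt{1-s}}$ Jacobian coming from the $\frac{n}{N}\leftrightarrow$ rank versus arrival-proportion change of variables, which I'd make precise by the same substitution used elsewhere in the paper ($dr/(1-r)$, cf.\ Remark~\ref{rk:valueIndeCost}). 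Either way, the structure is: $R^N_{\ceil{rN}}=\frac{B}{C^N}\{(\text{main term})(r)+O(1/N)\}$ where the main term converges uniformly on $[0,\alpha]$ to the brace in~\eqref{eq:optimalR}; combined with $B/C^N=B/C+O(1/N)$ and boundedness of the brace on $[0,\alpha]$, this gives $\sup_{r\in[0,\alpha]}|R^N_{\ceil{rN}}-R^*(r)|=O(1/N)$.

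The main obstacle I anticipate is the uniformity of the Riemann-sum error up to the endpoint $r=\alpha$: the summands in $C^N$, $y_k$ and especially $\frac{1}{N-k-1}$ all deteriorate as $k\to n_0-1$ would approach $N$, but here $n_0/N\to\alpha<1$ keeps everything bounded, so the real issue is just bookkeeping the $O(1/N)$ discrepancy between $n_0=\ceil{\alpha N}$ and $\alpha N$, plus confirming the correct Jacobian so that the discrete sum genuinely matches~\eqref{eq:optimalR} rather than an off-by-a-power-of-$(1-r)$ variant. This is a careful but routine calculus exercise; no new ideas beyond the explicit formulas already proved are needed.
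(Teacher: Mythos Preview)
Your approach is exactly the paper's: reduce everything to Riemann-sum convergence of the explicit formulas in Theorems~\ref{th:principal} and~\ref{thm:optRN}, using that all relevant functions are Lipschitz on $[0,\alpha]\subset[0,1)$ and that $c^N\to c$ at rate $O(1/N)$.

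The ``mismatch'' you flag in the third step is not real; it is an algebra slip in your computation of $y_k$. From
\[
  y_k=\sqrt{\frac{c_k\, N(N-k-1)}{(N-k)(2N-k-1)}}
\]
with $k\approx sN$ one gets numerator $\approx c(s)\cdot N\cdot N(1-s)$ and denominator $\approx N(1-s)\cdot N(2-s)$, so the factors $N(1-s)$ cancel and
\[
  y_k\;\longrightarrow\;\sqrt{\frac{c(s)}{2-s}},
\]
not $\sqrt{c(s)(1-s)/(2-s)}$. This matches~\eqref{eq:optimalR} directly, with no extra Jacobian needed: the sum $\frac{1}{2}\sum_{k=n-1}^{n_0-1}\frac{y_k}{N-k-1}$ is simply a Riemann sum for $\frac{1}{2}\int_r^\alpha\frac{1}{1-s}\sqrt{c(s)/(2-s)}\,ds$. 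Once you correct this, your outline is complete and coincides with the paper's proof.
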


The next result addresses the convergence of the principal's problem in a different sense: it shows that if the principal applies (a discretization of) the optimal reward scheme $R^{*}$ from the mean field setting in an $N$-player game with large $N$, rather than the precise optimal scheme $R^{N}$ of Theorem~\ref{thm:optRN}, then $R^{*}$ is still $\eps$-optimal for the minimization of the expected completion time.

\begin{corollary}\label{co:epsOptimalPrincipal}
  Let $c^{N}\to c$ in the sense of \eqref{eq:discretizationc} and let $R^{(N)}$ be a discretization of $R^{*}$ satisfying~\eqref{eq:discretizationR}. Then the completion time $T^{(N)}_{\ceil{\alpha N}}$ of the $N$-player game under $R^{(N)}$ satisfies 
  \[
    \left|ET^{(N)}_{\ceil{\alpha N}} - ET^{N}_{\ceil{\alpha N}}\right| = O(1/N);
  \]
  that is, the reward scheme $R^{(N)}$ is $O(1/N)$-optimal for the $N$-player principal--agent problem.
\end{corollary}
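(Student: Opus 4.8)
The plan is to compare the completion time under the discretized mean-field-optimal scheme $R^{(N)}$ with two intermediate quantities: the mean field completion time $T^*_\alpha$ under $R^*$, and the $N$-player completion time $ET^N_{\lceil\alpha N\rceil}$ under the genuinely optimal scheme $R^N$ of Theorem~\ref{thm:optRN}. Since Theorem~\ref{thm:principal-convergence} already gives $|ET^N_{\lceil\alpha N\rceil}-T^*_\alpha|=O(1/N)$, it suffices to show that the completion time of the $N$-player game run under $R^{(N)}$ satisfies $|ET^{(N)}_{\lceil\alpha N\rceil}-T^*_\alpha|=O(1/N)$ as well; the claimed bound then follows by the triangle inequality.

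To establish this, I would first invoke the forward-convergence result of Theorem~\ref{thm:fwd-convergence} applied to the reward scheme $R^*$ (with its discretization $R^{(N)}$, which satisfies~\eqref{eq:discretizationR} by hypothesis) and the cost discretization $c^N$. Because $R^*$ vanishes on $(\alpha,1]$, Remark~\ref{rmk:better-rate} upgrades the rate: $\sup_r|\lambda^{(N)}_{\lfloor rN\rfloor}-\lambda^*(r)|=O(1/N)$ on the relevant intervals, where $\lambda^{(N)}$ and $\lambda^*$ are the equilibrium controls of the $N$-player game under $R^{(N)}$ and of the mean field game under $R^*$, respectively. Next I would transfer this control estimate to the completion times. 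In the mean field setting $T^*_\alpha$ is obtained by inverting the ODE~\eqref{eq:kolmogorov} with $\lambda=\lambda^*$; in the $N$-player setting, the expected completion time $ET^{(N)}_{\lceil\alpha N\rceil}$ is a sum of expected holding times $\sum_{n=0}^{n_0-1}\frac{1}{(N-n)\lambda^{(N)}_n}$ of the pure jump process (each interarrival time, given $n$ players present, being exponential with parameter $(N-n)\lambda^{(N)}_n$). The point is that this sum is a Riemann-type approximation of $\int_0^\alpha \frac{dr}{(1-r)\lambda^*(r)}$, which equals $T^*_\alpha$ by the change of variables in~\eqref{eq:kolmogorov}. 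Controlling the discretization error of this sum, together with the $O(1/N)$ control estimate and the fact that $\lambda^*$ is bounded away from zero on $[0,\alpha]$ (visible from the explicit formula in Theorem~\ref{th:principal}), yields $|ET^{(N)}_{\lceil\alpha N\rceil}-T^*_\alpha|=O(1/N)$.

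The main obstacle I anticipate is the last step: passing from a uniform $O(1/N)$ bound on the equilibrium \emph{controls} to an $O(1/N)$ bound on the \emph{expected completion time}, uniformly in $N$. Two issues must be handled with care. First, one must ensure the error terms in the Riemann sum do not accumulate — there are $n_0\sim\alpha N$ summands, each contributing an error, so one needs the per-term error to be $O(1/N^2)$, which in turn requires the $O(1/N)$ control estimate to be genuinely uniform over $n<n_0$ and requires $\lambda^*$ to be bounded below on the closed interval $[0,\alpha]$ (so that $1/\lambda^{(N)}_n$ is stable under perturbations of size $O(1/N)$). Second, one should verify that the reward scheme $R^{(N)}$, being merely a discretization satisfying~\eqref{eq:discretizationR}, indeed induces a well-defined $N$-player equilibrium whose control stays uniformly positive on $\{0,\dots,n_0-1\}$; this follows from the explicit equilibrium formulas~\eqref{eq:valueN}–\eqref{eq:optconN} in Proposition~\ref{pr:Nplayer} together with the convergence $R^{(N)}\to R^*$, but the bookkeeping of these elementary estimates is where the real work lies. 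Once these uniform bounds are in place, the corollary follows by combining them with Theorem~\ref{thm:principal-convergence} through the triangle inequality.
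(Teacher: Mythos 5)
Your proposal is correct and follows essentially the same route as the paper: invoke Theorem~\ref{thm:fwd-convergence} with Remark~\ref{rmk:better-rate} to get uniform $O(1/N)$ convergence of the equilibrium controls, rewrite $ET^{(N)}_{\ceil{\alpha N}}=\sum_{n}\frac{1}{(N-n)\lambda^{(N)}_n}$ as an integral of $1/f_N$ with $f_N(r)=(1-\floor{rN}/N)\lambda^{(N)}_{\floor{rN}}$ and compare it to $T^*_\alpha=\int_0^\alpha\frac{dr}{(1-r)\lambda^*(r)}$ using that $\lambda^*$ is bounded away from zero on $[0,\alpha]$, then conclude via Theorem~\ref{thm:principal-convergence} and the triangle inequality. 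The "main obstacle" you flag (avoiding accumulation of per-term errors) is resolved exactly as you anticipate: the sum is \emph{exactly} the integral of the piecewise-constant $1/f_N$, so the total error is $\int_0^{\ceil{\alpha N}/N}\frac{|f-f_N|}{f f_N}\,dr=O(1/N)$ with no further bookkeeping needed.
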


\appendix
\section{Proofs}\label{se:proofs}

\subsection{Proofs for Section~\ref{se:equilib}}

\begin{proof}[Proof of Lemma~\ref{le:kolmogorovEqn}.]
  The piecewise Lipschitz property of $\lambda$ implies that~\eqref{eq:kolmogorov} has a unique continuous solution $\rho$. This function is nonnegative, increasing, globally Lipschitz continuous, and continuously differentiable except at finitely many points (corresponding to the jumps to $\lambda$) where the left and right derivatives of $\rho$ may disagree.
  Let $\bar\rho(t) = 1-\exp(-\int_{0}^{t} \lambda(\rho(s))\,ds)$.
  By the Exact Law of Large Numbers (Proposition~\ref{pr:ELLN}) and~\eqref{eq:arrivalTimeDefn},
  $$
    \mu\{i:\, \tau_{\lambda}^{i}(\omega)\in [0,t]\} = P\{\omega:\, \tau_{\lambda}^{i}(\omega)\in [0,t]\}=\bar\rho(t)
  $$
  holds almost-surely. On the other hand, the derivative of $\bar\rho$ is seen to satisfy
  $\bar\rho'(t) = \lambda(\rho(t))(1-\bar\rho(t))$ a.e.,
  so that the uniqueness of the exponential ODE yields $\bar\rho=\rho$.
%
\end{proof}

\begin{proof}[Proof of Theorem~\ref{th:equilibrium}.]
  (i) Suppose $\bar\lambda\in\Lambda$ is an equilibrium optimal control, let $\rho$ be the corresponding equilibrium state process and let $\bar{v}$ be the corresponding value function of~\eqref{eq:valueSinglePlayer} for any given player before arrival. Using the constant control $\lambda\equiv0$ shows that $R(1) \leq \bar{v}(r) \leq R(r)$ and hence $\bar{v}(1-)=R(1)$; recall that $R$ is left-continuous at $r=1$. Let 
  $$
    r_{0}=\inf\{r:\, R(r)=R(1)\}.
  $$
  Using $\lambda\equiv0$ also shows that $\bar{v}\equiv R(1)$ on $[r_{0},1]$, whereas 
  $\bar{v}< R$ on $[0,r_{0})$ because the only control with zero cost merely attains $R(1)<R(r)$.   The inequality $R(1)<R(r)$ on $[0,r_{0})$ also implies that $\bar\lambda>0$ a.e.\ on $[0,r_{0})$ 
  and in fact, recalling that $\bar\lambda\in\Lambda$, even that $\bar\lambda$ is a.e.\ uniformly bounded away from zero on any interval $[0,r_{1}]$ with $r_{1}<r_{0}$. 
  
  A control-theoretic argument shows that $\bar{v}$ is Lipschitz on any such interval. Indeed, let $0\leq r < r_{1}$ and choose $h>0$ such that $r+2h<r_{1}$. We may compare the optimal control $\bar{\lambda}$ started at $r$ with a control $\lambda$ such that 
  $$
  \lambda = \begin{cases}
  0 & \mbox{ on } [r,r+h),\\
  a\bar{\lambda} & \mbox{ on } [r+h,r+2h),\\
  \bar{\lambda} & \mbox{ on } [r+2h,1),\\
  \end{cases}
  $$
  where the constant $a\geq1$ is chosen such that the integral of $\lambda$ along $\rho$ over $[r,r+2h]$ coincides with the integral of $\bar\lambda$ over the same interval. Then, both the extra cost of $\lambda$ and the loss of expected reward are bounded by a  constant (depending only on $r_{1}$) times $h$, because if $\bar\lambda$ achieves a rank in $[r+2h,1)$, then $\lambda$ achieves the same rank, whereas the probability of $\bar\lambda$ ranking in $[r,r+2h)$ is bounded by a constant times $h$. Since $\lambda$ is an admissible control for the control problem started at $r+h$, it follows that $0\leq \bar{v}(r)-\bar{v}(r+h)\leq Ch$ as claimed. In particular, $\bar{v}$ is absolutely continuous and a.e.\ differentiable.
  
  By dynamic programming, the value function $\bar{v}$ must then a.e.\ satisfy the Hamilton--Jacobi equation
  $$
        \sup_{l\geq0} \{l[R(r)-v(r)]-c(r)l^2\}+\bar\lambda(r)(1-r) v'(r)=0\mbox{ on }[0,1), \quad v(1)=R(1).
  $$
  Moreover, the optimal control $\bar\lambda$ must attain the supremum a.e. Recalling that $\bar{v}= R$ on $[r_{0},1]$ and $\bar{v}< R$ on $[0,r_{0})$, it follows that
  \begin{equation}\label{eq:firstOrderControl}
    \bar\lambda(r)=\frac{R(r)-\bar{v}(r)}{2c(r)} \quad\mbox{a.e.}
  \end{equation}
  and that $\bar{v}$ satisfies
  \begin{equation}\label{eq:HJODE}
    R(r)-v(r)+2(1-r) v'(r)=0\mbox{ a.e.\ on }[0,r_{0}), \quad v\equiv R(1)\mbox{ on }  [r_{0},1].
  \end{equation}
  
  (ii)  Using the regularity of $R$, we see that the function $v$ of~\eqref{eq:value} is Lipschitz on $[0,1)$ and satisfies $v(1-)=R(1)$. We extend $v$ to a Lipschitz function on $[0,1]$ by setting $v(1)=R(1)$. A direct calculation shows that $v$ satisfies~\eqref{eq:HJODE} and that $\lambda^{*}$ of~\eqref{eq:equilibOptControl} is the corresponding maximizer~\eqref{eq:firstOrderControl}. Moreover, $\lambda^{*}\in\Lambda$. A verification argument then yields that $v$ is the value function and $\lambda^{*}$ is an optimal control.
  
  (iii) It remains to show that~\eqref{eq:HJODE} has at most one absolutely continuous solution. Indeed, if $v_{1}$ and $v_{2}$ are solutions, then $w=v_{1}-v_{2}$ is absolutely continuous and satisfies
  $$
     2(1-r) w'(r)=w(r)\mbox{ a.e.\ on }[0,r_{0}), \quad w\equiv 0\mbox{ on }  [r_{0},1].
  $$
  If $r_{0}<1$, this is a Lipschitz ODE and it follows directly that $w\equiv 0$ is the unique solution. If $r_{0}=1$, we set $u(t)=w(1-e^{-2t})$ for $t\geq0$; then $u$ satisfies $u'(t)=u(t)$ on $[0,\infty)$ and hence $u(t)=u(0)e^{t}$. But $u(\infty)=w(1)=0$ then yields that $u\equiv0$ and thus $w\equiv 0$ as desired.
\end{proof}

\begin{proof}[Proof of Proposition~\ref{pr:valueProbabRepresent}.]
  Let $V(r)= E[R(\rho(\tau))|\rho(0)=r]$. The ODE for $\rho$ has the unique solution $\rho(t)=1-(1-r) e^{-2\lambda_{0}t}$ for $t\geq0$. Hence,
  $$
    V(r)=E[R(1-(1-r) e^{-2\lambda_{0}\tau})]=\int_{0}^{\infty} \lambda_{0}e^{-\lambda_{0}x}R(1-(1-r) e^{-2\lambda_{0}x})\,dx.
  $$
  A change of variables then shows that $V(r)$ coincides with~\eqref{eq:value}.
\end{proof}

\begin{proof}[Proof of Proposition~\ref{pr:stability}.]
  Note that $R_{n},R$ have a uniform upper bound given by $\sup_{n} R_{n}(0)$. Moreover, by monotonicity, the convergence $R_{n}\to R$ is uniform on each interval of continuity of $R$. It follows directly from~\eqref{eq:value} that the value functions $v_{n}$ converge uniformly to their counterpart~$v$. Similarly, \eqref{eq:equilibOptControl} yields that the optimal controls $\lambda^{*}_{n}$ converge pointwise to their counterpart $\lambda^{*}$, and uniformly on each interval of Lipschitz continuity of $R$. Moreover, there is a uniform upper bound for the sequence $(\lambda^{*}_{n})$. By the ODE~\eqref{eq:kolmogorov}, this entails an upper bound for the Lipschitz constants of $(\rho_{n})$. Thus, after passing to a subsequence, $(\rho_{n})$ converges uniformly to a limit~$\bar{\rho}$. To verify that $\bar\rho=\rho$, it suffices to show that $\bar\rho$ solves the ODE~\eqref{eq:kolmogorov} defining $\rho$ on each of the mentioned intervals, and that follows from the uniform convergence of $(\lambda^{*}_{n})$. Finally, by a subsequence argument, the entire sequence $(\rho_{n})$ must converge to~$\rho$.
\end{proof}
\subsection{Proofs for Section~\ref{se:MFprincipal}}

As a preparation for the proof of Theorem~\ref{th:principal}, we first show that no reward should be distributed to ranks below $\alpha$---this is quite intuitive since the planner does not care about agents arriving after rank $\alpha$. The converse is also true.

\begin{lemma} \label{le:cutoff}
  The value $T^{*}_{\alpha}$ of~\eqref{eq:optimalContractValue} does not change if the infimum is restricted to $R\in\cR$ satisfying $R >0$ on $[0,\alpha)$ and $R =0$ on $(\alpha,1]$.
\end{lemma}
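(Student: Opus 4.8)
The plan is to observe that the restricted infimum is automatically at least $T^*_\alpha$ (being an infimum over a subclass of $\cR$), and to prove the reverse inequality by replacing an arbitrary competitor by a member of the subclass that does no worse. Since $T^*_\alpha<\infty$ --- for instance the uniform reward $\frac{B}{\alpha}\1_{[0,\alpha]}$ already lies in the subclass and has finite $T_\alpha$, cf.~\eqref{eq:quantileUniformCutoff} --- it suffices to show that for every $R\in\cR$ with $\int_0^1 R(r)\,dr\le B$ and $T_\alpha(R)<\infty$, the truncation $R':=R\1_{[0,\alpha]}$ lies in the subclass, satisfies $\int_0^1 R'(r)\,dr\le B$, and obeys $T_\alpha(R')\le T_\alpha(R)$; taking the infimum over such $R$ then shows that $T^*_\alpha$ dominates the restricted infimum.

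First I would dispose of the routine points: $R'$ is piecewise Lipschitz, left-continuous at $1$, and decreasing --- the last because $R$ is decreasing and $R(\alpha)\ge0$ equals the value $R'$ takes on $(\alpha,1]$ --- so $R'\in\cR$, and $\int_0^1 R'\le\int_0^1 R\le B$. The crux is that truncating the tail weakly raises the equilibrium effort on $[0,\alpha)$: for $r\in[0,\alpha)$ the formulas~\eqref{eq:value} and~\eqref{eq:equilibOptControl} give $v_R(r)-v_{R'}(r)=\frac{1}{2\sqrt{1-r}}\int_\alpha^1\frac{R(y)}{\sqrt{1-y}}\,dy\ge0$, hence $\lambda^*_{R'}(r)-\lambda^*_R(r)=\frac{1}{4c(r)\sqrt{1-r}}\int_\alpha^1\frac{R(y)}{\sqrt{1-y}}\,dy\ge0$. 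Thus the equilibrium drift $r\mapsto\lambda^*(r)(1-r)$ associated with $R'$ dominates that of $R$ pointwise on $[0,\alpha)$, both drifts being nonnegative and piecewise Lipschitz. Comparing the solutions of the Kolmogorov ODE~\eqref{eq:kolmogorov} --- legitimate because the state process is monotone and crosses each of the finitely many breakpoints of the drift once, so the comparison patches across Lipschitz pieces --- yields $\rho_{R'}(t)\ge\rho_R(t)$ until the level $\alpha$ is reached; equivalently, separating variables expresses $T_\alpha(R)=\int_0^\alpha\frac{dr}{\lambda^*_R(r)(1-r)}$ with the convention $1/0=\infty$, an integrand that can only shrink when $R$ is replaced by $R'$. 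Either way $T_\alpha(R')\le T_\alpha(R)<\infty$.

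It then remains to note that $R'>0$ on $[0,\alpha)$ is automatic once $T_\alpha(R')<\infty$. If $R'(r_1)=0$ for some $r_1<\alpha$, then $R'\equiv0$ on $[r_1,1]$ since $R'$ is decreasing and nonnegative; by~\eqref{eq:value} this forces $v_{R'}\equiv0$ on $[r_1,1)$ and by~\eqref{eq:equilibOptControl} $\lambda^*_{R'}\equiv0$ on $[r_1,1)$, so the equilibrium drift vanishes on $(r_1,1)$. As $\rho_{R'}$ is continuous and nondecreasing it can then never exceed $r_1<\alpha$, whence $T_\alpha(R')=\infty$, a contradiction. Therefore $R'$ belongs to the subclass, which completes the proof.

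The only step I expect to require genuine care is the monotonicity of $T_\alpha$ under the pointwise ordering of the equilibrium controls, i.e.\ the comparison argument for~\eqref{eq:kolmogorov}, or equivalently the verification that the separated-variables formula for $T_\alpha$ is valid including its infinite values, given that the drift is only piecewise Lipschitz and may vanish. Everything else is bookkeeping with the explicit formulas of Theorem~\ref{th:equilibrium}, and the observation that a reward scheme which vanishes strictly before rank $\alpha$ stalls the population --- hence cannot be near-optimal --- is precisely what makes the ``$R>0$ on $[0,\alpha)$'' half of the statement cost nothing.
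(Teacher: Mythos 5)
Your proof is correct and follows essentially the same route as the paper's: truncating the tail of $R$ at $\alpha$ weakly increases the equilibrium effort on $[0,\alpha)$ via the explicit formula~\eqref{eq:equilibOptControl} and hence weakly decreases $T_\alpha$, while a reward vanishing before rank $\alpha$ stalls the state process and forces $T_\alpha=\infty$. The only difference is presentational — you spell out the ODE comparison/separation-of-variables step that the paper leaves implicit — so no further comment is needed.
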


\begin{proof}
  For the first property, suppose that $R\in\cR$ vanishes at $r\in[0,\alpha)$, and hence on $[r,1]$. Then the equilibrium effort $\lambda$ of~\eqref{eq:equilibOptControl} also vanishes at~$r$, which by the Kolmogorov equation~\eqref{eq:kolmogorov} implies that the state $\rho(t)$ never exceeds $r$, and hence that $T_{\alpha}(R)=\infty$.
  
  To see the second property, let $R\in\cR$ and set $\hat R=R\1_{[0,\alpha]}$; then $\hat R\in\cR$. For $r\in [0,\alpha]$, the corresponding equilibrium efforts $\lambda$ and $\hat\lambda$ of~\eqref{eq:equilibOptControl} satisfy
  \begin{align*}
    \hat\lambda(r)
      =\frac{\hat R(r)-\frac{1}{2\sqrt{1-r}}\int_r^1 \frac{\hat R(y)}{\sqrt{1-y}}\,dy}  {2c(r)} 
      \geq\frac{R(r)-\frac{1}{2\sqrt{1-r}}\int_r^1 \frac{R(y)}{\sqrt{1-y}}\,dy}{2c(r)}=\lambda(r),
  \end{align*}
  and the inequality is strict if $\int_{\alpha}^{1}\frac{R(y)}{\sqrt{1-y}}\,dy>0$. As a result, if $R$ does not vanish on $(\alpha,1]$, then $\hat R$ produces a strictly larger equilibrium effort on $[0,\alpha]$, and hence  $T_{\alpha}(\hat R)<T_{\alpha}(R)$ whenever $T_{\alpha}(R)<\infty$, by~\eqref{eq:kolmogorov}.
\end{proof}

We can now show the theorem through a calculus of variations argument.

\begin{proof}[Proof of Theorem~\ref{th:principal}.]
Let 
$$
  \cR'=\big\{R\in\cR:\, \smallint R(r)\,dr\leq B,\,T_{\alpha}(R)<\infty, \, R\1_{(\alpha,1]}=0\big\}.
$$
As noted, $\cR'\neq\emptyset$ by~\eqref{eq:quantileUniformCutoff}, and by Lemma~\ref{le:cutoff} it is sufficient to show that $R^{*}$ is the unique optimizer in $\cR'$. Moreover, we have seen in the proof of Lemma~\ref{le:cutoff} that $\lambda^{*}$ is a.e.\ strictly positive on $[0,\alpha)$ for $R\in\cR'$. Hence, $\rho$ is strictly increasing and we have $T_{r}(R)=\rho^{-1}(r)$ for $r\in [0,\alpha)$. Differentiating $\rho^{-1}(r)$ and using the ODE~\eqref{eq:kolmogorov} for $\rho$ and~\eqref{eq:equilibOptControl}, we then obtain that
\begin{align}
  T_{\alpha}(R) 
  &=\int_0^\alpha \frac{1}{(1-r)\lambda^{*}(r)}\,dr \nonumber \\
  &= \int_0^\alpha \frac{2c(r)}{\sqrt{1-r}\left(R(r)\sqrt{1-r}-\int_r^\alpha \frac{R(s)}{2\sqrt{1-s}}\,ds\right)}\,dr,\quad R\in\cR'. \label{eq:Talpha}
\end{align}
We see from~\eqref{eq:Talpha} that $R\mapsto T_{\alpha}(R)$ is strictly convex on $\cR'$, up to a.e.\ equivalence. This implies that there is at most one optimal $R\in\cR'$.

Next, we derive a sufficient condition for optimality. We first reparametrize the optimization problem: for $R\in\cR'$, we consider
\[
  f(r)=f_{R}(r)=R(r)\sqrt{1-r}-\int_r^\alpha \frac{R(s)}{2\sqrt{1-s}}\,ds,\quad r\in[0,\alpha].
\]
The mapping $R\mapsto f_{R}$ is one-to-one on $\cR'$ since $R$ can be recovered from $f$ via
\begin{equation}\label{eq:f-to-R}
  R(r) = \frac{f(r)}{\sqrt{1-r}}+\int_r^\alpha \frac{f(s)}{2(1-s)^{3/2}}\,ds.
\end{equation}
Indeed, if $R$ is differentiable, then $f'(r)=\sqrt{1-r}R'(r)$ and integration by parts yields
\begin{align*}
  R(r) &=R(\alpha)-\int_r^\alpha \frac{f'(s)}{\sqrt{1-s}}\,ds \\
   &=\frac{f(\alpha)}{\sqrt{1-\alpha}}-\int_r^\alpha \frac{f'(s)}{\sqrt{1-s}}\,ds 
   =\frac{f(r)}{\sqrt{1-r}}+\int_r^\alpha \frac{f(s)}{2(1-s)^{3/2}}\,ds,
\end{align*}
and now the claim for general $R\in\cR'$ follows by approximation. Fubini's Theorem shows that $\int_{0}^{\alpha} R(r)\,dr = \frac12 \int_0^\alpha \frac{(2-r)f(r)}{(1-r)^{3/2}}\,dr$. Thus, recalling that $\alpha<1$, the image $\cF$ of $\cR'$ under $R\mapsto f_{R}$ is the convex set of all piecewise Lipschitz, nonnegative, decreasing
functions $f:[0,\alpha]\to \R$ such that~\eqref{eq:Tf} is finite and the budget constraint
\begin{equation}\label{eq:constraint-f}
  \frac12 \int_0^\alpha \frac{(2-r)f(r)}{(1-r)^{3/2}}\,dr\leq B
\end{equation}
is satisfied.
We write $T_\alpha(f_{R})$ for $T_\alpha(R)$ by a slight abuse of notation; then by~\eqref{eq:Talpha} we have
\begin{equation}\label{eq:Tf}
   T_\alpha(f)= \int_0^\alpha \frac{2c(r)}{\sqrt{1-r} f(r)}\,dr,\quad f\in \cF.
\end{equation}
The mapping $f\mapsto T_\alpha(f)$ 
is convex and finite-valued, and clearly, $f^{*}\in\cF$ is optimal if an only if
$$
  \eps \mapsto \phi(\eps)=T((1-\eps)f^{*}+ \eps f),\quad [0,1]\to\R
$$
attains a minimum at $\eps=0$ for all $f\in\cF$. By convexity, this function has a right derivative $\phi'(0)$ at $\eps=0$, and $\phi$ attains a minimum at $\eps=0$ if and only if $\phi'(0)\geq0$. Note that for any convex function $\varphi$, the right difference quotient $(\varphi(x+\eps)-\varphi(x))/\eps$ satisfies $\varphi'(x) \leq (\varphi(x+\eps)-\varphi(x))/\eps \leq \varphi(x+1)-\varphi(x)$ for $\eps\le 1$. Using these bounds and dominated convergence, we see that $\phi'(0)$ can be computed by differentiating under the integral:
\begin{equation}\label{eq:auxDerivative}
  \phi'(0) = \int_0^\alpha \frac{-2c(r)(f(r)-f^{*}(r)) }{\sqrt{1-r}f^{*}(r)^{2}}\,dr.
\end{equation}
Let $R^{*}$ be as in~\eqref{eq:optimalR}; then the corresponding function $f^{*}=f_{R^{*}}$ is given by
\begin{equation}\label{eq:f*}
  f^*(r)=\frac{B}{C}\sqrt{\frac{c(r)(1-r)}{2-r}}, \quad r\in[0,\alpha].
\end{equation}
Recalling that $\alpha<1$ and that $\frac{c(r)(1-r)}{2-r}$ is decreasing, we verify directly that $f^*\in \cF$. Moreover, the budget constraint~\eqref{eq:constraint-f} is satisfied with equality.

Fix an arbitrary $f\in\cF$. Using the expression~\eqref{eq:f*} in the denominator of~\eqref{eq:auxDerivative}, we have
$$
  \phi'(0) = \frac{-2C^{2}}{B^{2}}\int_0^\alpha \frac{(2-r)(f(r)-f^{*}(r))}{(1-r)^{3/2}}\,dr.
$$
Since $f^{*}$ satisfies~\eqref{eq:constraint-f} with equality and $f$ satisfies the same with inequality, the above integral is nonpositive.
Thus, $\phi'(0)\geq0$, showing that $f^{*}\in\cF$ is optimal and hence that $R^{*}\in\cR$ is optimal. The formula~\eqref{eq:optimalTime} for $T^{*}_{\alpha}$ is then obtained from~\eqref{eq:Tf} and~\eqref{eq:f*}, and the formula for $\lambda^{*}$ follows via~\eqref{eq:equilibOptControl}.
\end{proof}

\subsection{Proofs for Section~\ref{se:NplayerProblems}}

\begin{proof}[Proof of Proposition~\ref{pr:Nplayer}.]
  Fix player $i$ and suppose that all other players use a control $\lambda_{-i}$. By dynamic programming, the value function $v_{n}$ of player $i$ before arrival satisfies
  $$
  \sup_{\lambda_i\ge 0}\left\{\lambda_i [R_{n+1}-v_n]-c_n\lambda_i^2\right\}+\lambda_{-i}(n) (N-n-1)(v_{n+1}-v_n)=0
  $$
  for $0\leq n \leq N-2$. For $n=N-1$, the same holds by Remark~\ref{rk:vN} and our convention that $v_{N}=R_{N}$. Thus, \eqref{eq:optconN} is the optimal control for player $i$ 
  and it follows that
  \[
    \frac{(R_{n+1}-v_n)^2}{4c_n}+\lambda_{-i}(n) (N-n-1)(v_{n+1}-v_n)=0.
  \]
  Assuming inductively that $v_{n+1}\leq R_{n+1}$, this quadratic equation has a unique nonnegative root $v_{n}$, and $v_{n}$ satisfies $0\leq v_{n} \leq R_{n+1} \leq R_{n}$.
  In a given equilibrium, the consistency condition $\lambda_i=\lambda_{-i}$ implies that  
  \begin{equation}\label{eq:differenceEq}
  R_{n+1}-v_n+2(N-n-1)(v_{n+1}-v_n)=0, \quad n=0,\ldots, N-1,
  \end{equation}
  or equivalently~\eqref{eq:valueN}, which clearly has a unique solution. Conversely, we can verify directly that~\eqref{eq:valueN} and~\eqref{eq:optconN} define an equilibrium.
\end{proof}

\begin{proof}[Proof of Theorem~\ref{thm:optRN}.]
  We first observe that $T_{n_0}$ is a sum of independent exponential random variables (whenever finite) and thus
  \begin{equation}\label{eq:ET}
  E T_{n_0}=\sum_{n=0}^{n_0-1}\frac{1}{(N-n)\lambda_{n}}.
  \end{equation}
  Moreover, similarly as in Lemma~\ref{le:cutoff}, it suffices to consider reward schemes with $R_{n}>0$ for $n\leq n_{0}$ and $R_{n}=0$ for $n> n_{0}$. Indeed, the first claim is immediate from~\eqref{eq:optconN}. To obtain the second, we argue by contradiction and compare $(R_{n})$ with the scheme defined by $\hat{R}_n=R_n\1_{\{n\le n_0\}}$. Proposition~\ref{pr:Nplayer} implies that $\hat{R}_n$ leads to a strictly larger equilibrium control before $n_{0}$ and hence a strictly smaller completion time. Thus, we only consider reward schemes up to $n=n_{0}$ in what follows.
  
  From \eqref{eq:valueN}, \eqref{eq:optconN} and \eqref{eq:ET}, we see that $ET_{n_0}: \mathbb{R}^{n_0}_{+} \rightarrow [0,\infty]$ is a strictly convex, continuous function of $(R_1,\ldots, R_{n_0})$. Moreover, the feasible set defined by $R_1\ge R_2\ge \cdots \ge R_{n_0}\ge 0$ and $\sum_{n=1}^N R_n\le NB$ is nonempty, convex and compact. As a result, there exists a unique optimal reward scheme.
  In the remainder of the proof, we determine this reward scheme explicitly. To that end, define $x_{n_0}=0$ and $x_n=R_{n+1}-v_n$ for $n< n_0$. By~\eqref{eq:optconN} and~\eqref{eq:ET}, the objective function can then be expressed as
  \begin{align*}
  ET_{n_0}=\sum_{n=0}^{n_0-1}\frac{2c_n}{(N-n) x_n}.
  \end{align*}
  From \eqref{eq:valueN} we obtain that
  $
    R_{n+1}-R_{n+2}=\frac{1+2(N-n-1)}{2(N-n-1)}x_n-x_{n+1}
  $
  and thus the total reward $\sum_{n=1}^{n_0} R_n=\sum_{n=1}^{n_0} n(R_{n}-R_{n+1})$ can be expressed as 
  \begin{align*}
  \sum_{n=1}^{n_0} R_n
  &=\sum_{n=1}^{n_0} n\left\{\frac{1+2(N-n)}{2(N-n)}x_{n-1}-x_{n}\right\}=\sum_{n=0}^{n_0-1} \frac{2N-n-1}{2(N-n-1)} x_n.
  \end{align*}
  The constrained optimization problem of finding $x_{n}\in\R$ which
  \[
    \text{minimize}\quad \sum_{n=0}^{n_0-1}\frac{2c_n}{(N-n) x_n} \quad\text{subject to}\quad \sum_{n=0}^{n_0-1} \frac{2N-n-1}{2(N-n-1)} x_n\le NB
  \]
  can be solved using the method of Lagrange multipliers. One finds that the optimal $x_n$ are given by
  \[
    x_n=2\sqrt{\frac{c_n(N-n-1)}{\theta(N-n)(2N-n-1)}}, \quad n=0, \ldots, n_0-1
  \]
  where the Lagrange multiplier $\theta$ satisfies
  \[
    \sqrt{\theta}=\frac{1}{NB}\sum_{n=0}^{n_0-1} \sqrt{\frac{c_n (2N-n-1)}{(N-n)(N-n-1)}}.
  \]
  Note that $\theta$ and $x_n$ are related to $C$ and $y_n$ of Theorem~\ref{thm:optRN} by $C=B\sqrt{N\theta}/2$ and $x_n=2 y_n/\sqrt{N\theta}=By_n/C$. We have that $R_{n_0}\ge 0$ as $x_{n_0-1}\ge 0$, and 
  $R_{n}-R_{n+1}=\frac{B}{C} \big(\frac{1+2(N-n)}{2(N-n)}y_{n-1}-y_{n} \big)\ge 0$ for $n<n_0$; here the last inequality is equivalent to~\eqref{eq:assumptCostN}.
  Thus, the reward scheme $(R^*_{n})$ associated with the optimizer $(x_{n})$ is indeed nonnegative and decreasing, and it follows that $(R^*_{n})$ is the optimal reward scheme. The formulas for $R^*$, $T^*_{n_0}$ and $\lambda^*$ follow by direct calculation.
\end{proof}

\subsection{Proofs for Section~\ref{se:convergence}}

\begin{proof}[Proof of Theorem~\ref{thm:fwd-convergence}.]
Let $N\geq 4$ be large enough such that $\delta:=1/N$ satisfies $r_m<1-\sqrt{\delta}-\delta$. This implies, in particular, that $1-r_m>\delta+\sqrt{\delta}> 2\delta$. We may rewrite the recursion~\eqref{eq:valueN} for $v^{N}_{n}$ as
\begin{equation}\label{eq:vn}
v^{N}_{n}=g\left(\frac{n}{N}\right) v^{N}_{n+1}+ \left(1-g\left(\frac{n}{N}\right)\right)R^{N}_{n+1}
\end{equation}
where
\[g(r)=\frac{2\left(1-r-\delta\right)}{\delta+2\left(1-r-\delta\right)}.\]
Using~\eqref{eq:value}, we write the mean field value in a similar form:
\begin{equation}\label{eq:vr}
v\left(\frac{n}{N}\right)=f\left(\frac{n}{N}\right)v\left(\frac{n+1}{N}\right)+\left(1-f\left(\frac{n}{N}\right)\right) R^{N}_{n+1}+E_{1,n}
\end{equation}
where
\[
  f(r):=\sqrt{\frac{1-r-\delta}{1-r}} \quad\mbox{and}\quad E_{1,n} :=\frac{1}{2\sqrt{1-n\delta}}\int_{n\delta}^{(n+1)\delta}\frac{R(y)-R^{N}_{\ceil{yN}}}{\sqrt{1-y}}\,dy.
\]
Next, we estimate $\Delta_{n}:=\left|v^{N}_{n}-v\left(\frac{n}{N}\right)\right|$. For the last rank, we have $\Delta_N=\left|R^N_N-R(1)\right|\le K\delta$ by \eqref{eq:discretizationR}.
For the second-to-last rank, since $1-\delta\ge 1-\sqrt{\delta}>r_m+\delta$, \eqref{eq:discretizationR} again implies that
\begin{align*}
\Delta_{N-1}&=\left|R^{N}_N-v\left(1-\delta\right)\right|\le \frac{1}{2\sqrt{\delta}}\int_{1-\delta}^1\frac{|R^{N}_{\ceil{yN}}-R(y)|}{\sqrt{1-y}}\,dy\le K\delta.
\end{align*}
For $n\le N-2$, subtracting \eqref{eq:vr} from \eqref{eq:vn} and using that $g(r)\le 1$ when $0\le r\le 1-\delta$, we obtain that 
\begin{equation}\label{eq:fconvDelta}
\Delta_n
\le
\Delta_{n+1} +E_{1,n}+E_{2,n},
\end{equation}
where
\[E_{2,n}=\left|g\left(\frac{n}{N}\right)-f\left(\frac{n}{N}\right)\right|\cdot \left|v\left(\frac{n+1}{N}\right)-R^{N}_{n+1}\right|.\]
Next, we estimate $E_{1,n}$ and $E_{2,n}$. To that end, it will be useful to define $n_0:=N-\ceil{\sqrt{N}}$ and note that $n\le n_0$ if and only if $n\delta \le 1-\sqrt{\delta}$.

\vspace{.5em}

\emph{Estimation of $E_{1,n}$.}
If $n_0+1\le n\le N-2$, then $r_m+\delta<n\delta\le 1-2\delta$, and \eqref{eq:discretizationR} implies
\begin{equation*}
E_{1,n}\le K\delta \left(1-\sqrt{\frac{1-n\delta-\delta}{1-n\delta}}\right)\le K\delta.
\end{equation*}
On the other hand, if $0\le n\le n_0$ and $\left[n\delta,(n+1)\delta\right]\subseteq I^{N}_i $ for some $i$, then \eqref{eq:discretizationR} and $1-n\delta\ge \sqrt{\delta}$ imply
\[E_{1,n}\le K\delta\frac{\frac{\delta}{1-n\delta}}{1+\sqrt{\frac{1-n\delta-\delta}{1-n\delta}}}\le K\delta^{3/2}.\]
We observe that there are at most $3m$ indices $n$ for which $n\le n_0$ and $\left[n\delta,(n+1)\delta\right]$ is not contained in any $I^{N}_i$. For these $n$, we use boundedness of $R$ and the inequality  $1-n\delta\ge 1-r_m-\delta>(1-r_m)/2$ to obtain 
\begin{equation*}
E_{1,n}\le (R(0)+K\delta)\left(1-\sqrt{\frac{1-n\delta-\delta}{1-n\delta}}\right)
< \frac{2(R(0)+K/4)\delta}{(1-r_m)}=:C_0\delta.
\end{equation*}
In summary, 
\begin{equation}\label{eq:fconvE1}
E_{1,n}\le
\begin{cases}
K\delta, & \text{if } n_0+1\le n\le N-2,\\
K\delta^{3/2}, & \text{if } n\le n_0 \text{ and $\left[n\delta,(n+1)\delta\right]\subseteq I^{N}_i $ for some $i$},\\
 C_0\delta , & \text{otherwise (at most $3m$ instances).}
\end{cases}
\end{equation}

\vspace{.5em}

\emph{Estimation of $E_{2,n}$.} Taylor's theorem implies that for all $x\in[0,1]$,
$$
 0\leq \sqrt{x} - \frac{2x}{1+x} \leq \frac{1}{2}(1-x)^{2}\left( \frac{1}{4x^{3/2}} + \frac{4}{(1+x)^3} \right).
$$
Using this with $x=1-\frac{\delta}{1-r}$ yields
\begin{align*}
\left|g(r)-f(r)\right|\le \frac{\delta^2}{(1-r)^2} h\left(1-\frac{\delta}{1-r}\right),
\end{align*}
where $h(x):=\frac{1}{8x^{3/2}} + \frac{2}{(1+x)^3}$.
For $0\le n\le Nr_m$, we have 
$\frac{\delta}{1-n\delta}\le \frac{\delta}{1-r_m}\le \frac{1}{2}$ and thus
\begin{align*}
E_{2,n}&\le  (R(0)+K/4)\frac{\delta^2}{(1-r_m)^2}h\left(\frac{1}{2}\right)=: C_1 \delta^2.
\end{align*}
For $Nr_m<n\le N-2$, we have $(n+1)\delta>r_m+\delta$ and it follows that
\begin{align*}
&\left|v\left(\frac{n+1}{N}\right)-{R}^{N}_{n+1}\right|\\
&\; =\left| \frac{1}{2\sqrt{1-(n+1)\delta}}\int_{(n+1)\delta}^1\!\!\!\frac{R(y)-R((n+1)\delta)}{\sqrt{1-y}}\,dy+R((n+1)\delta)-R^{N}_{n+1}\right|\\
&\;\le K(1-(n+1)\delta)+K\delta=K(1-n\delta).
\end{align*}
Consequently, 
\begin{align*}
E_{2,n}&\le K(1-n\delta)\frac{\delta^2}{(1-n\delta)^2} h\left(1-\frac{\delta}{1-n\delta}\right)\le \frac{K}{2}h\left(\frac{1}{2}\right) \delta<\frac{K}{2}\delta,
\end{align*}
where we have used that $1-n\delta\ge 2\delta$.

As in the estimation of $E_{1,n}$, the bound can be improved if $Nr_m<n\le n_0$ and thus $1-n\delta\ge \sqrt{\delta}$. In this case, we have
\begin{align*}
E_{2,n}
&\le Kh\left(\frac{1}{2}\right) \delta^{3/2}< K \delta^{3/2}.
\end{align*}
In summary,
\begin{equation}\label{eq:fconvE2}
E_{2,n}\le
\begin{cases}
\frac{1}{2}K\delta, & \text{if } n_0+1\le n\le N-2,\\
K\delta^{3/2}, & \text{if } Nr_m<n\le n_0,\\
C_1 \delta^2 , & \text{if } n\le Nr_m.
\end{cases}
\end{equation}

\emph{Combining the Estimates.}
Combining \eqref{eq:fconvDelta}, \eqref{eq:fconvE1} and \eqref{eq:fconvE2} and recalling $\Delta_{N-1}\le K\delta$, we obtain for $n_0+1\le n\le N-2$ that
\[\Delta_n\le \Delta_{N-1}+\frac{3}{2}K\delta(N-1-n)\le K\delta+\frac{3}{2}K\sqrt{\delta}<\frac{5}{2}K\sqrt{\delta}\]
and for $n\le n_0$ that
\begin{align*}
\Delta_n&\le \Delta_{n_0+1}+(2K+C_1)\delta^{3/2}(n_0+1-n)+3mC_0 \delta\\
&\le \frac{5}{2} K\sqrt{\delta}+(2K+C_1)\sqrt{\delta}(1-\sqrt{\delta}+\delta)+3m C_0\delta<C_2\sqrt{\delta}.
\end{align*}
Putting everything together, we have
$\sup_{0\le n\le N}\Delta_n\le \frac{\max(5K/2,C_2)}{\sqrt{N}}.$
It remains to note that
\[|v'(r)|=\frac{|R(r)-v(r)|}{2(1-r)}\le 
\begin{cases}
\frac{K}{2}, & r_m<r< 1,\\[.2em]
\frac{R(0)}{2(1-r_m)}, & 0\le r\le r_m,
\end{cases}\]
and consequently,
\begin{align*}
\left|v^{N}_{\floor{rN}}-v(r)\right|
\le \Delta_{\floor{rN}}+ \left|v\left(\frac{\floor{rN}}{N}\right)-v\left(r\right)\right|
\le \Delta_{\floor{rN}}+\frac{\|v'\|_\infty}{N}<\frac{C_{3}}{\sqrt{N}}.
\end{align*}

Since $\lambda^{*}(r)=\frac{R(r)-v(r)}{2c(r)}$ and $\lambda^{N}_n=\frac{R^{N}_{n+1}-v^{N}_n}{2c^{N}_n}$, the convergence of $\lambda^{N}$ follows from the uniform convergence of the value functions and the cost coefficients, the almost uniform convergence of the reward scheme, and the uniform boundedness of $1/c$.
\end{proof}

\begin{proof}[Proof of Theorem~\ref{thm:principal-convergence}.]
  We first observe the convergence of the Riemann sum
  \[C^{N}:=\frac{1}{2N}\sum_{n=0}^{\ceil{\alpha N}-1} \sqrt{\frac{c^{N}_n(2-\frac{n+1}{N})}{(1-\frac{n}{N})(1-\frac{n+1}{N})}} \to \frac{1}{2} \int_0^\alpha \frac{ \sqrt{ c(r)(2-r)}}{(1-r)}\,dr =: C.\]
  The rate of convergence is $O(1/N)$ since $c^{N}$ converges to $c$ (which is bounded away from zero) uniformly at rate $O(1/N)$ and $\frac{\sqrt{2-r}}{1-r}$ is Lipschitz continuous on $[0,\alpha]$. Using the formulas of Theorems~\ref{th:principal} and~\ref{thm:optRN}, we conclude that
  \[\lim_{N\rightarrow\infty} ET^{N}_{\ceil{\alpha N}}=\lim_{N\rightarrow\infty}\frac{4(C^{N})^2}{B}=\frac{4C^2}{B}=T^*_\alpha\]
  with rate $O(1/N)$.
  Turning to the convergence of the reward schemes, we similarly observe that for $r\le \alpha$, 
  \[y_{\ceil{rN}-1}=\sqrt{\frac{c^{N}_{\ceil{rN}-1} N(N-\ceil{rN})}{(N-\ceil{rN}+1)(2N-\ceil{rN})}}\rightarrow \sqrt{\frac{c(r)}{2-r}}=y(r)\]
  and
  \[\frac{1}{N}\sum_{k={\ceil{rN}}-1}^{{\ceil{\alpha N}}-1}\frac{ y_k}{1-\frac{k+1}{N}}\rightarrow \int_r^\alpha \frac{y(s)}{1-s}\,ds,\]
  uniformly in $r\in [0,\alpha]$ with rate $O(1/N)$. Hence, the formulas in Theorems~\ref{th:principal} and~\ref{thm:optRN} yield that
  \begin{align*}
   R^{N}_{\ceil{r N}} &=\frac{B}{C^{N}}\left\{y_{\ceil{r N}-1}+\frac{1}{2}\sum_{k=\ceil{r N}-1}^{\ceil{\alpha N}-1}\frac{ y_k}{N-k-1}\right\}\\
  &\rightarrow \frac{B}{C}\left\{y(r)+\frac{1}{2}\int_r^\alpha \frac{y(s)}{1-s}\,ds\right\}=R^*(r)
  \end{align*}
  uniformly in $r\in [0,\alpha]$ with rate $O(1/N)$.
\end{proof}

\begin{proof}[Proof of Corollary~\ref{co:epsOptimalPrincipal}.]
  Theorem~\ref{thm:fwd-convergence} and Remark~\ref{rmk:better-rate} imply that the $N$-player equilibrium control $\lambda^{(N)}_{\floor{rN}}$ for $R^{(N)}$ converges uniformly to the mean field equilibrium control $\lambda^*(r)$ for $R^*$, at rate $O(1/N)$. It follows that 
  \[f_N(r):=(1-\floor{rN})\lambda^{(N)}_{\floor{rN}}\to (1-r)\lambda^*(r)=:f(r)\]
  uniformly at rate $O(1/N)$. Since $\lambda^*$ is uniformly bounded away from zero on $[0,\alpha]$, the same holds for $f$ and $f_N$  with large $N$. Thus, using~\eqref{eq:ET} and~\eqref{eq:Talpha},
  \begin{align*}
  \left|ET^{(N)}_{\ceil{\alpha N}} -T^*_\alpha\right|&= \left|\sum_{n=0}^{\ceil{\alpha N}-1}\frac{1}{(N-n)\lambda^{(N)}_n}-\int_0^\alpha \frac{dr}{(1-r)\lambda^*(r)}\right|\\
  &\le \int_0^\frac{\ceil{\alpha N}}{N}  \frac{|f(r)-f_N(r)|}{f_N(r) f(r)} dr=O(1/N).
  \end{align*}
  As Theorem~\ref{thm:principal-convergence} shows that $|T^*_\alpha -ET^{N}_{\ceil{\alpha N}}|=O(1/N)$, the claim follows.
\end{proof}

\section{Exact Law of Large Numbers}\label{se:exactLLN}\label{se:ELLN}

In this section, we detail a setting such that the exact law of large numbers holds for a continuum of essentially pairwise independent random variables. 
Let $(I,\cI,\mu)$ be an atomless (hence uncountable) probability space and let $(\Omega,\cF,P)$ be another probability space.

\begin{definition}\label{de:essentialPairwiseIndep}
  A family $(f_{i})_{i\in I}$ of random variables on $(\Omega,\cF,P)$ is \emph{essentially pairwise  independent} if for $\mu$-almost all $i\in I$, $f_{i}$ is independent of $f_{j}$ for $\mu$-almost all $j\in I$. The family is \emph{essentially pairwise i.i.d.}\ if, in addition, all $f_{i}$ have the same distribution.
\end{definition}

In what follows, we need to work on a probability space that is larger than the usual product\footnote{
Here we use the convention that the product $\sigma$-field $\cI\otimes \cF$ is completed. 
}
 $(I\times\Omega,\cI\otimes \cF,\mu\otimes P)$, because the latter does not support relevant families of i.i.d.\ random variables (see, e.g., \cite[Proposition~2.1]{Sun.06}).
%
Following Sun~\cite{Sun.06}, a probability space $(I\times\Omega,\Sigma,\nu)$ is an \emph{extension} of the  product $(I\times\Omega,\cI\otimes \cF,\mu\otimes P)$ if $\Sigma$ contains $\cI\otimes \cF$ and the restriction of $\nu$ to $\cI\otimes \cF$ coincides with $\mu\otimes P$. It is a \emph{Fubini extension} if, in addition, any $\nu$-integrable\footnote{
That is, $f$ is measurable for the $\nu$-completion of $\Sigma$ and $\int |f|\,d\nu<\infty$.
}
function $f: I\times\Omega\to\R$ satisfies the assertion of Fubini's theorem;
that is,

\begin{enumerate}
 \item for $\mu$-almost all $i\in I$, the function $f(i,\cdot)$ is $P$-integrable,
 \item for $P$-almost all $\omega\in \Omega$, the function $f(\cdot,\omega)$ $\mu$-integrable,
 \item $i\mapsto \int f(i,\cdot)\,dP$ is $\mu$-integrable, $\omega\mapsto \int (\cdot,\omega)\,d\mu$ is $P$-integrable, and
 $$
   \int f\,d \nu = \iint f(i,\omega)\,P(d\omega)\,\mu(di) = \iint f(i,\omega)\,\mu(di)\,P(d\omega).
 $$
\end{enumerate}

Let $(I\times\Omega,\Sigma,\nu)$ be a Fubini extension of $(I\times\Omega,\cI\otimes \cF,\mu\otimes P)$. Then, essentially pairwise independent families satisfy an exact version of the Law of Large Numbers. The following is a special case of \cite[Corollary~2.9]{Sun.06}.

%

\begin{proposition}[Exact Law of Large Numbers]\label{pr:ELLN}
  Let $f: I\times\Omega\to \R$ be $\nu$-integrable. If $f(i,\cdot)$, $i\in I$ are essentially pairwise i.i.d.\ with a distribution having mean $m$, then $\int f(\cdot,\omega)\,d\mu=m$ for $P$-almost all $\omega\in\Omega$.
\end{proposition}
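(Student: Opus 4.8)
The plan is to prove the statement by the classical second-moment argument, treating the square-integrable case first and then reducing the general integrable case to it by truncation; I note at the outset that the only random variables fed into this proposition in Section~\ref{se:equilib} are exponential, hence have all moments, so the square-integrable case already suffices for the applications.

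\emph{Square-integrable case.} Suppose $f(i,\cdot)\in L^2(P)$ and set $g(\omega):=\int_I f(i,\omega)\,\mu(di)$. By the Fubini-extension property this is $P$-a.s.\ well-defined and $P$-integrable, and Fubini gives $\int g\,dP=\iint f\,d\mu\,dP=m$. For the variance I would use that the two-fold product of $\mu\otimes\mu$ with $P$ again carries a Fubini extension on which $(i,j,\omega)\mapsto f(i,\omega)f(j,\omega)$ is integrable---this self-consistency under products is part of Sun's framework---so that
\[
  \int_\Omega g(\omega)^2\,P(d\omega)=\int_{I\times I}\E\big[f(i,\cdot)f(j,\cdot)\big]\,(\mu\otimes\mu)(d(i,j)).
\]
Essential pairwise independence says the set $B=\{(i,j)\in I\times I:\, f(i,\cdot)\text{ and }f(j,\cdot)\text{ are not independent}\}$ has $\mu$-null sections for $\mu$-almost every $i$, hence $(\mu\otimes\mu)(B)=0$ by Fubini on $I\times I$; and since the $f(i,\cdot)$ share a common distribution of mean $m$, one has $\E[f(i,\cdot)f(j,\cdot)]=m^2$ off $B$. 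Therefore $\int g^2\,dP=m^2=(\int g\,dP)^2$, so $g$ has zero variance and $g\equiv m$ $P$-a.s.

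\emph{General integrable case.} Truncate $f^{(k)}:=(-k)\vee(f\wedge k)$; the family $(f^{(k)}(i,\cdot))_{i\in I}$ is still essentially pairwise i.i.d., with common mean $m_k\to m$. Writing $g_k(\omega):=\int_I f^{(k)}(i,\omega)\,\mu(di)$, the previous step gives $g_k=m_k$ $P$-a.s., while
\[
  \int_\Omega|g(\omega)-g_k(\omega)|\,P(d\omega)\le\iint|f-f^{(k)}|\,d\mu\,dP=\E\big|f(i,\cdot)-f^{(k)}(i,\cdot)\big|\to 0\text{ as }k\to\infty.
\]
Passing to a subsequence along which $g_k\to g$ $P$-a.s.\ and combining with $g_k=m_k\to m$ identifies $g\equiv m$ $P$-a.s., as claimed.

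The main obstacle is the variance computation: it is \emph{not} a Fubini interchange on the bare product $(I\times\Omega,\cI\otimes\cF,\mu\otimes P)$, since the functions in play need not be jointly measurable there, and one genuinely needs the measure-theoretic machinery ensuring that Fubini extensions are stable under products (so that $f(i,\omega)f(j,\omega)$ lands in the appropriate completed product $\sigma$-field and Fubini applies). That machinery---constructed via Loeb-space techniques in \cite{Sun.06}---is exactly what makes the statement true, which is why in the paper the cleanest route is simply to invoke \cite[Corollary~2.9]{Sun.06}; the argument sketched above records why that corollary delivers the stated conclusion in our setting.
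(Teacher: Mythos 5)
The paper does not actually prove this proposition: it is imported wholesale as a special case of \cite[Corollary~2.9]{Sun.06}, so there is no in-text argument to compare yours against step by step. What you have written is, in substance, the standard proof from that literature --- a zero-variance (second-moment) argument for the sample mean, preceded by truncation to reduce $L^1$ to $L^2$ --- and your diagnosis of where the difficulty sits is accurate: the interchange behind $\int_\Omega g^2\,dP=\iint E[f(i,\cdot)f(j,\cdot)]\,d\mu\,d\mu$ is not a Fubini step on the bare product and is precisely what Sun's Loeb-product construction is built to license. Two remarks on your write-up. First, you can avoid both the appeal to a Fubini extension of the triple product and the $\mu\otimes\mu$-measurability of your set $B$ (which is not obvious, since $\{(i,j):f(i,\cdot)\not\perp f(j,\cdot)\}$ need not lie in $\cI\otimes\cI$): for each fixed $j$ the map $(i,\omega)\mapsto f(i,\omega)f(j,\omega)$ is $\Sigma$-measurable, and likewise $(j,\omega)\mapsto f(j,\omega)g(\omega)$ once the Fubini-extension property has produced a $P$-measurable $g$; two successive applications of that property on $(I\times\Omega,\Sigma,\nu)$ then yield $\int_\Omega g^2\,dP=\int_I\bigl(\int_I E[f(i,\cdot)f(j,\cdot)]\,\mu(di)\bigr)\mu(dj)$ as an \emph{iterated} integral, and essential pairwise independence makes the inner integral equal to $m^2$ for $\mu$-almost every $j$. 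Second, the truncation step is sound (measurability and essential pairwise i.i.d.-ness survive composition with $x\mapsto(-k)\vee(x\wedge k)$, and $m_k\to m$ by dominated convergence), and your observation that the exponential inputs of Section~\ref{se:equilib} are already square-integrable means the reduction is not even needed for the paper's application. In short: your approach is the right one and contains no wrong turn, but the single step you delegate to ``Sun's framework'' carries essentially all of the mathematical content of the proposition, which is presumably why the authors cite rather than reprove it.
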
  

%
%

Next, we turn to the existence of such a setting. The space $(I\times\Omega,\Sigma,\nu)$ is called \emph{rich} if there exists a $\Sigma$-measurable function $f: I\times\Omega\to\R$ such that $f(i,\cdot)$, $i\in I$ are essentially pairwise i.i.d.\ with a uniform distribution on $[0,1]$. By composing $f$ with a suitable function, it then follows that $(I\times\Omega,\Sigma,\nu)$ supports essentially pairwise i.i.d.\ families with any given distribution.
%

\begin{lemma}\label{le:richExtensionsExist}
  There exist atomless probability spaces $(I,\cI,\mu)$ and $(\Omega,\cF,P)$ such that $(I\times\Omega,\cI\otimes \cF,\mu\otimes P)$ admits a rich Fubini extension.
\end{lemma}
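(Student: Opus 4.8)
The plan is to construct the space by nonstandard analysis via Loeb's procedure; a reader content with a citation may instead simply invoke the construction in Sun~\cite{Sun.06}, and what follows is in essence an outline of that. Fix an unlimited hypernatural $H$, equip $I=\{1,\dots,H\}$ with the internal uniform counting measure $\mu_{0}$, and let $(I,\cI,\mu)$ be the associated Loeb space; since $H$ is unlimited, $\mu$ is atomless. For the sample space, take $\Omega_{0}=({}^{*}[0,1])^{I}$ with the internal product $P_{0}$ of copies of internal Lebesgue measure, write $\zeta_{i}\colon\Omega_{0}\to{}^{*}[0,1]$ for the $i$-th coordinate map, and let $(\Omega,\cF,P)$ be the Loeb space of $(\Omega_{0},P_{0})$; again $P$ is atomless. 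Internally, the family $(\zeta_{i})_{i\in I}$ is independent with each $\zeta_{i}$ uniform on ${}^{*}[0,1]$.

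First I would form the Loeb space $(I\times\Omega,\Sigma,\nu)$ of the internal product $(I\times\Omega_{0},\,\mu_{0}\otimes P_{0})$. One checks that $\cI\otimes\cF\subseteq\Sigma$: a rectangle $A\times B$ with $A\in\cI$ and $B\in\cF$ differs by a $\nu$-null set from an internal rectangle $A'\times B'\in\Sigma$, using the Fubini property of the Loeb product measure to see that $(A\triangle A')\times\Omega_{0}$ and $I\times(B\triangle B')$ are $\nu$-null. On such rectangles $\nu$ agrees with $\mu\otimes P$, hence the two measures agree on $\cI\otimes\cF$ by uniqueness of extensions, so $(I\times\Omega,\Sigma,\nu)$ is an extension of $(I\times\Omega,\cI\otimes\cF,\mu\otimes P)$. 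For the Fubini property I would appeal to Keisler's Fubini theorem for Loeb product spaces: lift a $\nu$-integrable $f$ to an internal function, apply the internal (hyperfinite) Fubini theorem by transfer, and commute the standard-part operation with the internal integrals; this yields properties (i)--(iii) in the definition of a Fubini extension.

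For richness, put $f(i,\omega)={}^{\circ}\zeta_{i}(\omega)$, which is $\Sigma$-measurable since the evaluation map $(i,\omega)\mapsto\zeta_{i}(\omega)$ is internal. Each section $f(i,\cdot)$ is uniform on $[0,1]$ because $\zeta_{i}$ is internally uniform on ${}^{*}[0,1]$ and the standard-part map carries the Loeb measure of internal Lebesgue measure to Lebesgue measure on $[0,1]$. Moreover, for each fixed $i$ and each $j\neq i$, the pair $(\zeta_{i},\zeta_{j})$ has internal law equal to the product of two uniforms, so $({}^{\circ}\zeta_{i},{}^{\circ}\zeta_{j})$ has law equal to Lebesgue measure on $[0,1]^{2}$; hence $f(i,\cdot)$ and $f(j,\cdot)$ are independent, which is stronger than essential pairwise independence. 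Composing $f$ with the quantile function of any target distribution then produces essentially pairwise i.i.d.\ families with arbitrary law, completing the proof.

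The main obstacle is the Fubini property. The delicate points are that the product $\sigma$-algebra $\cI\otimes\cF$ of the two marginal Loeb spaces must be shown to lie inside the (strictly larger) Loeb $\sigma$-algebra $\Sigma$ of the product, and that Keisler's theorem has to be applied at the level of the standard product measure $\mu\otimes P$ and not merely its marginals. These are exactly the issues resolved in \cite{Sun.06}, so in the write-up the most economical course is to cite that reference rather than reproduce the nonstandard apparatus in full.
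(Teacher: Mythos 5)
Your proposal is correct and is in substance the same as the paper's, which simply cites Sun's Proposition~5.6 (whose proof is precisely the hyperfinite Loeb product construction you outline, transferred to $I=[0,1]$ and $\Omega=\R^{[0,1]}$ by measure isomorphism). You correctly identify the genuinely delicate point --- that the Loeb $\sigma$-algebra of the internal product contains $\cI\otimes\cF$ and that Keisler's Fubini theorem applies at the level of the Loeb marginals --- and defer it to the same reference, so nothing further is needed.
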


This is part of the assertion of \cite[Proposition~5.6]{Sun.06} which also shows that one can take $I=[0,1]$ and $\Omega=\R^{[0,1]}$. The main result of Sun and Zhang~\cite{SunZhang.09} shows that, in addition, one can take $\mu$ to be an extension of the Lebesgue measure (but not the Lebesgue measure itself). A different construction is presented by Podczeck~\cite{Podczeck.10}.

\bibliography{competition}

\begin{thebibliography}{10}

\bibitem{BayraktarCohen.17}
E.~Bayraktar and A.~Cohen.
\newblock Analysis of a finite state many player game using its master
  equation.
\newblock {\em Preprint arXiv:1707.02648v1}, 2017.

\bibitem{BayraktarZhang.16}
E.~Bayraktar and Y.~Zhang.
\newblock A rank-based mean field game in the strong formulation.
\newblock {\em Electron. Commun. Probab.}, 21(72):1--12, 2016.

\bibitem{BensoussanChauYam.16}
A.~Bensoussan, M.~H.~M. Chau, and S.~C.~P. Yam.
\newblock Mean field games with a dominating player.
\newblock {\em Appl. Math. Optim.}, 74(1):91--128, 2016.

\bibitem{BensoussanFrehseYam.13}
A.~Bensoussan, J.~Frehse, and S.~C.~P. Yam.
\newblock {\em Mean field games and mean field type control theory}.
\newblock Springer Briefs in Mathematics. Springer, New York, 2013.

\bibitem{Bertucci.17}
C.~Bertucci.
\newblock Optimal stopping in mean field games, an obstacle problem approach.
\newblock {\em Preprint arXiv:1704.06553v1}, 2017.

\bibitem{Cao.14}
D.~Cao.
\newblock Racing under uncertainty: boundary value problem approach.
\newblock {\em J. Econom. Theory}, 151:508--527, 2014.

\bibitem{CardaliaguetDelarueLasryLions.15}
P.~Cardaliaguet, F.~Delarue, J.~M. Lasry, and P.~L. Lions.
\newblock The master equation and the convergence problem in mean field games.
\newblock {\em Preprint arXiv:1509.0250v1}, 2015.

\bibitem{CarmonaDelaRue.17a}
R.~Carmona and F.~Delarue.
\newblock {\em Probabilistic Theory of Mean Field Games with Applications I}.
\newblock Springer, 2017.

\bibitem{CarmonaDelaRue.17b}
R.~Carmona and F.~Delarue.
\newblock {\em Probabilistic Theory of Mean Field Games with Applications II}.
\newblock Springer, 2017.

\bibitem{CarmonaDelaRueLacker.17}
R.~Carmona, F.~Delarue, and D.~Lacker.
\newblock Mean field games of timing and models for bank runs.
\newblock {\em Appl. Math. Optim.}, 76(1):217--260, 2017.

\bibitem{CarmonaWang.16}
R.~Carmona and P.~Wang.
\newblock Finite state mean field games with major and minor players.
\newblock {\em Preprint arXiv:1610.05408v1}, 2016.

\bibitem{ElieMastroliaPossamai.16}
R.~Elie, T.~Mastrolia, and D.~Possamai.
\newblock A tale of a principal and many many agents.
\newblock {\em Preprint arXiv:1608.05226v1}, 2016.

\bibitem{EliePossamai.16}
R.~Elie and D.~Possamai.
\newblock Contracting theory with competitive interacting agents.
\newblock {\em Preprint arXiv:1605.08099v1}, 2016.

\bibitem{GomesMohrSouza.13}
D.~A. Gomes, J.~Mohr, and R.~R. Souza.
\newblock Continuous time finite state mean field games.
\newblock {\em Appl. Math. Optim.}, 68(1):99--143, 2013.

\bibitem{GrossmanShapiro.87}
G.~M. Grossman and C.~Shapiro.
\newblock Dynamic {R} \& {D} competition.
\newblock {\em Econ. J.}, 97(386):372--387, 1987.

\bibitem{GueantLasryLions.11}
O.~Gu{\'e}ant, J.-M. Lasry, and P.-L. Lions.
\newblock Mean field games and applications.
\newblock In {\em Paris-{P}rinceton {L}ectures on {M}athematical {F}inance
  2010}, volume 2003 of {\em Lecture Notes in Math.}, pages 205--266. Springer,
  Berlin, 2011.

\bibitem{HarrisVickers.87}
C.~Harris and J.~Vickers.
\newblock Racing with uncertainty.
\newblock {\em Rev. Econom. Stud.}, 54(1):1--21, 1987.

\bibitem{Huang.09}
M.~Huang.
\newblock Large-population {LQG} games involving a major player: the {N}ash
  certainty equivalence principle.
\newblock {\em SIAM J. Control Optim.}, 48(5):3318--3353, 2009/10.

\bibitem{HuangMalhameCaines.07}
M.~Huang, P.~E. Caines, and R.~P. Malham{\'e}.
\newblock Large-population cost-coupled {LQG} problems with nonuniform agents:
  individual-mass behavior and decentralized {$\epsilon$}-{N}ash equilibria.
\newblock {\em IEEE Trans. Automat. Control}, 52(9):1560--1571, 2007.

\bibitem{HuangMalhameCaines.06}
M.~Huang, R.~P. Malham{\'e}, and P.~E. Caines.
\newblock Large population stochastic dynamic games: closed-loop
  {M}c{K}ean-{V}lasov systems and the {N}ash certainty equivalence principle.
\newblock {\em Commun. Inf. Syst.}, 6(3):221--251, 2006.

\bibitem{KooShimSung.08}
H.~K. Koo, G.~Shim, and J.~Sung.
\newblock Optimal multi-agent performance measures for team contracts.
\newblock {\em Math. Finance}, 18(4):649--667, 2008.

\bibitem{LasryLions.06a}
J.-M. Lasry and P.-L. Lions.
\newblock Jeux \`a champ moyen. {I}. {L}e cas stationnaire.
\newblock {\em C. R. Math. Acad. Sci. Paris}, 343(9):619--625, 2006.

\bibitem{LasryLions.06b}
J.-M. Lasry and P.-L. Lions.
\newblock Jeux \`a champ moyen. {II}. {H}orizon fini et contr\^ole optimal.
\newblock {\em C. R. Math. Acad. Sci. Paris}, 343(10):679--684, 2006.

\bibitem{LasryLions.07}
J.-M. Lasry and P.-L. Lions.
\newblock Mean field games.
\newblock {\em Jpn. J. Math.}, 2(1):229--260, 2007.

\bibitem{LazearRosen.81}
E.~Lazear and S.~Rosen.
\newblock Rank-order tournaments as optimum labor contracts.
\newblock {\em J Polit Econ.}, 89(5):841--64, 1981.

\bibitem{MaluegTsutsui.97}
D.~A. Malueg and S.~O. Tsutsui.
\newblock Dynamic {R} \& {D} competition with learning.
\newblock {\em RAND J. Econ.}, 28(4):751--772, 1997.

\bibitem{NadtochiyShkolnikov.17}
S.~Nadtochiy and M.~Shkolnikov.
\newblock Particle systems with singular interaction through hitting times:
  application in systemic risk modeling.
\newblock {\em Preprint arXiv:1705.00691v1}, 2017.

\bibitem{Nutz.16}
M.~Nutz.
\newblock A mean field game of optimal stopping.
\newblock {\em Preprint arXiv:1605.09112v1}, 2016.

\bibitem{Podczeck.10}
K.~Podczeck.
\newblock On existence of rich {F}ubini extensions.
\newblock {\em Econom. Theory}, 45(1-2):1--22, 2010.

\bibitem{Reinganum.82}
J.~F. Reinganum.
\newblock A dynamic game of {R} and {D}: patent protection and competitive
  behavior.
\newblock {\em Econometrica}, 50(3):671--688, 1982.

\bibitem{Shkolnikov.12}
M.~Shkolnikov.
\newblock Large systems of diffusions interacting through their ranks.
\newblock {\em Stochastic Process. Appl.}, 122(4):1730--1747, 2012.

\bibitem{Sun.06}
Y.~Sun.
\newblock The exact law of large numbers via {F}ubini extension and
  characterization of insurable risks.
\newblock {\em J. Econom. Theory}, 126(1):31--69, 2006.

\bibitem{SunZhang.09}
Y.~Sun and Y.~Zhang.
\newblock Individual risk and {L}ebesgue extension without aggregate
  uncertainty.
\newblock {\em J. Econom. Theory}, 144(1):432--443, 2009.

\bibitem{Vojnovic.15}
M.~Vojnovi\'{c}.
\newblock {\em Contest Theory}.
\newblock Cambridge University Press, 2015.

\end{thebibliography}
\bibliographystyle{plain}

\end{document}